
\documentclass[journal,onecolumn,12pt]{article}

\usepackage{amssymb}
\usepackage{caption2}
\usepackage{amsmath}
\usepackage{multirow}
\usepackage{amsthm}
\usepackage{graphicx}
\usepackage{CJK}
\usepackage{enumerate}

\usepackage{subfigure}
\usepackage{makecell}

\usepackage{tikz}
\usetikzlibrary{shapes.geometric}
\tikzset{gon/.style={name=tmp,regular polygon,regular polygon sides=#1,minimum
size=10pt,inner sep=0pt},
polygon side/.style args={#1--#2}{
insert path={(tmp.corner #1)-- (tmp.corner #2)}}}
\newcommand{\FlagGraph}[3][]{\ifnum#2=2%
\tikz[baseline=(tmp1)]{\node[circle,inner sep=0.7pt,fill] (tmp1) at (0,0){};
\node[#1,circle,inner sep=0.7pt,fill] (tmp2) at (0,10pt){};
\ifx#3\empty%
\else
\draw[#1] (tmp1) -- (tmp2);
\fi}
\else%
\tikz[baseline=(tmp.south)]{\node[#1,gon=#2]{};
\foreach \X in {1,...,#2}{\fill (tmp.corner \X) circle (1pt);}
\draw[#1,polygon side/.list={#3}];}
\fi}

\oddsidemargin  0pt \evensidemargin 0pt \marginparwidth 40pt
\marginparsep 10pt \topmargin -10pt \headsep 10pt \textheight
8.7in \textwidth 6.7in \textheight 7.8375in

\newtheorem{theorem}{Theorem}[section]

\newtheorem{proposition}[theorem]{Proposition}
\newtheorem{corollary}[theorem]{Corollary}
\newtheorem{conjecture}[theorem]{Conjecture}
\newtheorem{definition}[theorem]{Definition}

\newtheorem{remark}[theorem]{Remark}

\begin{document}
\title{Some Results on k-Tur\'{a}n-good Graphs}

\author{Bingchen Qian$^{\text{a}},$ Chengfei Xie$^{\text{b}}$ and Gennian Ge$^{\text{b,}}$\thanks{
  Corresponding author. Email address: gnge@zju.edu.cn. Research supported by the National Natural Science Foundation of
China under Grant No. 11971325, National Key Research and Development Program of China under Grant Nos. 2020YFA0712100 and 2018YFA0704703,
and Beijing Scholars Program.}\\
\footnotesize $^{\text{a}}$ School of Mathematical Sciences, Zhejiang University, Hangzhou 310027, Zhejiang, China\\
\footnotesize $^{\text{b}}$ School of Mathematical Sciences, Capital Normal University, Beijing, 100048, China}

\maketitle

\begin{abstract}

For a graph $H$ and a $k$-chromatic graph $F,$ if the Tur\'an graph $T_{k-1}(n)$ has the maximum number of copies of $H$ among all $n$-vertex $F$-free graphs (for $n$ large enough), then $H$ is called $F$-Tur\'an-good, or $k$-Tur\'an-good for short if $F$ is $K_k.$ In this paper, we construct some new classes of $k$-Tur\'an-good graphs and prove that $P_4$ and $P_5$ are $k$-Tur\'an-good for $k\ge4.$  

  \medskip
  \noindent{\it Keywords:} Paths, Tur\'an graph, Tur\'an-good.

  \smallskip

  \noindent {{\it AMS subject classifications\/}:  05C35, 05C38.}
\smallskip
\end{abstract}

\section{Introduction}

For a fixed graph $F,$ we say that a graph $G$ is $F$-free if it contains no copy of $F$ as a subgraph. One of the most studied area of extremal combinatorics is Tur\'an theory, which seeks to determine $ex(n,F),$ the largest number of edges in an $F$-free graph on $n$ vertices. A cornerstone of extremal graph theory is Tur\'an's theorem \cite{turan1941external}, which determines the maximum number of edges in an $n$-vertex $K_k$-free graph. The extremal construction is a complete $(k-1)$-partite graph on $n$ vertices such that each class has cardinality either $\lceil\frac{n}{k-1}\rceil$ or $\lfloor\frac{n}{k-1}\rfloor.$ Such a graph is called a Tur\'an graph and is denoted by $T_{k-1}(n).$

 Alon and Shikhelman \cite{MR3548290} initialed the systematic study of the generalized Tur\'an problem $ex(n,H,F),$ the largest number of copies of $H$ in $F$-free graphs on $n$ vertices.
For several further recent results see, e.g., \cite{MR4103873,MR3996750,MR3989658,MR4123109,MR4012878}. One difficulty to determine $ex(n,H,F)$ exactly is that there are few $F$-free graphs that are good candidates for being extremal constructions for maximizing copies of a graph $H.$ An exception is the Tur\'an graph. In \cite{2020arXiv200603756G}, Gerbner and Palmer examined when the Tur\'an graph is the extremal construction for these generalized Tur\'an problems.

Fix graphs $H$ and $G,$ and denote the number of copies of $H$ in $G$ by $\mathcal{N}(H,G).$ Then $ex(n,H,F)=\max\{\mathcal{N}(H,G):G \text{ is an $n$-vertex $F$-free graph} \}.$ In what follows, $K_n$ is the complete graph on $n$ vertices. $C_n$ denotes the cycle on $n$ vertices, $P_n$ denotes the path on $n$ vertices (with $n-1$ edges) and $M_l$ denotes the matching with $l$ edges (thus $2l$ vertices). If $X\subseteq V(G),$ $G[X]$ denotes the subgraph of $G$ induced on $X.$ We say that an edge $e$ of a graph $G$ is color-critical if deleting $e$ from $G$ decreases the chromatic number of the graph.

\begin{definition}[\cite{2020arXiv200603756G}]
  Fix a $k$-chromatic graph $F$ and a graph $H$ that does not contain $F$ as a subgraph. We say that $H$ is $F$-Tur\'an-good if $ex(n,H,F)=\mathcal{N}(H,T_{k-1}(n))$ for every $n$ large enough. If $F=K_{k},$ we use the brief term $k$-Tur\'an-good.
\end{definition}

So Tur\'an's theorem states that $K_2$ is $k$-Tur\'an-good for every $k>2.$ An early result of Zykov \cite{MR0035428}
(see also Erd\H{o}s \cite{MR151956}) showed that $K_r$ is $k$-Tur\'an-good for any positive integer $r\le k-1.$
\begin{theorem}[\cite{MR0035428}]\label{Kr is k turan good}
  The Tur\'an graph $T_{k-1}(n)$ is the unique $n$-vertex $K_k$-free graph with the maximum number of copies of $K_r.$ Thus
  \begin{equation*}
    ex(n,K_r,K_k)=\mathcal{N}(K_r,T_{k-1}(n))\le{k-1\choose r}\lceil\frac{n}{k-1}\rceil^r.
  \end{equation*}
\end{theorem}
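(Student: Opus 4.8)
The plan is to prove that among all $n$-vertex $K_k$-free graphs, the Turán graph $T_{k-1}(n)$ uniquely maximizes the number of copies of $K_r$ for $r \le k-1$. This is the classical Zykov result, and I would approach it via a symmetrization (Zykov symmetrization) argument, which is the standard and cleanest route.

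First I would set up Zykov symmetrization. Let $G$ be a $K_k$-free graph on $n$ vertices that maximizes $\mathcal{N}(K_r, G)$. The key operation: for two nonadjacent vertices $u$ and $v$, I can replace $v$ by a ``clone'' of $u$, meaning I delete all edges at $v$ and make $v$ adjacent to exactly the neighbors of $u$. The crucial observation is that if $u$ and $v$ are nonadjacent, then the number of copies of $K_r$ through $u$ and the number through $v$ can be compared: cloning the vertex with more $K_r$-copies through it does not decrease $\mathcal{N}(K_r, G)$, and importantly preserves $K_k$-freeness (since $u,v$ are nonadjacent and become twins, no new clique is created). I would argue that after symmetrizing, we may assume the nonadjacency relation is an equivalence relation, so $G$ becomes a complete multipartite graph.

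Next I would show the number of parts is exactly $k-1$ and the parts are balanced. Since $G$ is $K_k$-free, a complete multipartite graph can have at most $k-1$ parts. To maximize $K_r$-copies one wants exactly $k-1$ parts (fewer parts only reduces the count, since $\mathcal{N}(K_r, G)$ in a complete $(k-1)$-partite graph with part sizes $n_1,\dots,n_{k-1}$ equals $\sum_{1 \le i_1 < \dots < i_r \le k-1} n_{i_1}\cdots n_{i_r}$, an elementary symmetric polynomial). Then I would show this symmetric function is maximized, subject to $\sum n_i = n$, when the part sizes are as equal as possible, i.e., each is $\lceil n/(k-1)\rceil$ or $\lfloor n/(k-1)\rfloor$. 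This balancing is a convexity/smoothing argument: if two parts differ in size by at least $2$, shifting one vertex from the larger to the smaller strictly increases the relevant symmetric polynomial. This also yields uniqueness, since any deviation from balance strictly decreases the count.

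The main obstacle, and the step requiring the most care, is establishing \emph{uniqueness} rather than just optimality: the symmetrization argument naturally shows $T_{k-1}(n)$ achieves the maximum, but to conclude it is the \emph{unique} extremal graph I must verify that each symmetrization step either strictly increases $\mathcal{N}(K_r, G)$ or leaves it unchanged only when the two vertices already have identical neighborhoods, and that the final balancing step is strict. Tracking the equality cases through the symmetrization carefully is the delicate part. Finally, the explicit bound $\mathcal{N}(K_r, T_{k-1}(n)) = \sum n_{i_1}\cdots n_{i_r} \le \binom{k-1}{r}\lceil n/(k-1)\rceil^r$ follows immediately by bounding each of the $\binom{k-1}{r}$ terms by the product of the largest possible part sizes.
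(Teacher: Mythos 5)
First, a point of reference: the paper contains no proof of this statement. It is quoted as a classical theorem of Zykov (the citation in the theorem header, see also Erd\H{o}s), and the paper only ever uses the displayed inequality (for instance in the proof of Proposition 2.3). So the only meaningful comparison is with the classical argument, and your plan is exactly that classical route: Zykov symmetrization, reduction to complete multipartite graphs, then balancing. For the ``maximum value'' half of the statement and for the displayed bound, your outline is sound. Cloning a vertex $v$ onto a nonadjacent vertex $u$ preserves $K_k$-freeness; writing $d(x)$ for the number of copies of $K_r$ containing $x$, such a cloning changes $\mathcal{N}(K_r,G)$ by exactly $d(u)-d(v)$ (no copy contains both $u$ and $v$, as $u\not\sim v$), so cloning the better vertex never decreases the count; $\mathcal{N}(K_r,\cdot)$ on a complete multipartite graph is the elementary symmetric polynomial $e_r(n_1,\ldots,n_{k-1})$; and the smoothing step $(a,b)\mapsto(a-1,b+1)$ for $a\ge b+2$ strictly increases it, since the terms containing both of these parts gain $(a-1)(b+1)-ab=a-b-1>0$ while all other terms are unchanged. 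The final estimate $e_r(n_1,\ldots,n_{k-1})\le\binom{k-1}{r}\lceil n/(k-1)\rceil^r$ is immediate.

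The genuine gap is in the uniqueness half, exactly where you located the delicacy, but the criterion you propose is false. You claim a cloning step leaves $\mathcal{N}(K_r,G)$ unchanged ``only when the two vertices already have identical neighborhoods.'' By the formula above, equality holds if and only if $d(u)=d(v)$, which has nothing to do with the neighborhoods coinciding. A concrete counterexample to your criterion (with $k=3$, $r=2$): in $C_5$ every vertex lies in exactly two edges, so every single cloning step preserves the edge count, yet no two vertices of $C_5$ have the same neighborhood and $C_5$ is not extremal, having $5<6=\mathcal{N}(K_2,T_2(5))$ edges. The same example exposes a second problem you pass over: since individual steps may merely preserve the count, the iteration is not strictly monotone, so even the reduction ``we may assume $G$ is complete multipartite'' needs a termination or potential-function argument rather than just repeated cloning.

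The standard repair (essentially Zykov's) is structural rather than step-by-step. If a maximizer $G$ is not complete multipartite, there are vertices with $u\sim w$, $u\not\sim v$, $v\not\sim w$. If $d(v)<\max(d(u),d(w))$, one cloning strictly increases the count, a contradiction. Otherwise clone $v$ onto both $u$ and $w$ simultaneously; the count changes by $2d(v)-d(u)-d(w)+d(u,w)$, where $d(u,w)$ is the number of copies of $K_r$ containing the edge $uw$, so maximality forces $d(u,w)=0$ and $d(u)=d(v)=d(w)$. For $r=2$ this is an immediate contradiction (the edge $uw$ is itself a $K_2$), which settles uniqueness in Tur\'an's theorem; but for $3\le r\le k-1$ an edge lying in no copy of $K_r$ is not yet a contradiction, and a further argument handling such edges is required before one can conclude that every maximizer is complete multipartite (after which your strict balancing step correctly finishes uniqueness). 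So, as written, your plan proves $ex(n,K_r,K_k)=\mathcal{N}(K_r,T_{k-1}(n))$ and the stated inequality, but not the uniqueness clause of the theorem.
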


Theorem \ref{Kr is k turan good} was generalized by Ma and Qiu \cite{MR4014346} to graphs with a color-critical edge.
\begin{theorem}[\cite{MR4014346}]
  Let $F$ be a graph with a color-critical edge and chromatic number more than $r.$ Then $K_r$ is $F$-Tur\'an-good.
\end{theorem}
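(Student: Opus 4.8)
The plan is to fix the extremal object and reduce everything to Zykov's theorem. Since $\chi(T_{k-1}(n))=k-1<k=\chi(F)$, every subgraph of $T_{k-1}(n)$ is $(k-1)$-colorable, so $T_{k-1}(n)$ cannot contain $F$; hence $T_{k-1}(n)$ is $F$-free and $ex(n,K_r,F)\ge \mathcal{N}(K_r,T_{k-1}(n))$. For the reverse inequality, let $G$ be an $n$-vertex $F$-free graph with $\mathcal{N}(K_r,G)=ex(n,K_r,F)$. The key reduction is this: \emph{it suffices to prove that $G$ is $K_k$-free}, because then Theorem \ref{Kr is k turan good} immediately gives $\mathcal{N}(K_r,G)\le \mathcal{N}(K_r,T_{k-1}(n))$, which is exactly what we want. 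Thus the whole problem becomes showing that the extremal $F$-free graph contains no $K_k$.

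To forbid $K_k$'s I would first pin down the global shape of $G$. Because $\mathcal{N}(K_r,G)\ge \mathcal{N}(K_r,T_{k-1}(n))=\Theta(n^r)$, the graph $G$ is dense, and since $F$ is color-critical with $\chi(F)=k$, Simonovits' classical theorem gives $ex(n,F)=e(T_{k-1}(n))$ with $T_{k-1}(n)$ the unique extremal graph for large $n$. Combining these with a stability argument for clique counts, I would argue that $G$ is $o(n^2)$-close to $T_{k-1}(n)$: after deleting $o(n^2)$ edges, $G$ becomes $(k-1)$-partite with parts $V_1,\dots,V_{k-1}$ each of size $(1+o(1))\frac{n}{k-1}$, and all but $o(n^2)$ of the pairs between distinct parts are edges; a standard cleaning step, using extremality to delete any atypical vertex, lets me further assume every vertex is adjacent to all but $o(n)$ vertices of each other part. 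I expect this density-plus-stability step to be the main obstacle, since passing from ``many copies of $K_r$'' to ``almost extremal edge count'' is not delivered by crude bounds such as Kruskal--Katona and genuinely uses the generalized stability phenomenon.

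With the cleaned approximate partition in hand, the color-critical edge finishes the job. Let $e=xy$ be a color-critical edge of $F$; then $\chi(F-e)=k-1$, and in any proper $(k-1)$-coloring of $F-e$ the endpoints $x,y$ must lie in a common color class, for otherwise that coloring would $(k-1)$-color $F$. Suppose, for contradiction, that $G$ has an edge inside some part, say $ab$ with $a,b\in V_1$. I would embed $F$ into $G$ as follows: map the color class of $F-e$ containing $x,y$ into $V_1$ with $x\mapsto a$ and $y\mapsto b$, and map the remaining $k-2$ color classes injectively into $V_2,\dots,V_{k-1}$. Every edge of $F-e$ joins two distinct color classes, hence is sent to a pair between two distinct parts; since $F$ has constant size and (after cleaning) each chosen vertex misses only $o(n)$ vertices of every other part, a greedy choice realizes all these pairs as edges of $G$, producing a copy of $F$ and contradicting $F$-freeness. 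Therefore $G$ has no edge inside a part, so $G$ is $(k-1)$-partite and in particular $K_k$-free, completing the reduction.

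Finally I would record why color-criticality is essential: when $F=K_k$ the graph $G$ is $K_k$-free by definition and Theorem \ref{Kr is k turan good} applies verbatim, so the argument above is precisely the mechanism that extends Zykov's theorem from $K_k$ to every color-critical $F$ of chromatic number $k>r$. An alternative route I would keep in reserve is induction on $n$: delete a minimum-degree vertex $v$, bound $\mathcal{N}(K_r,G)\le \mathcal{N}(K_r,G-v)+\mathcal{N}(K_{r-1},G[N(v)])$, use $\delta(G)\le 2e(G)/n\le 2\,ex(n,F)/n$ to control $d(v)$, and apply Theorem \ref{Kr is k turan good} inside $G[N(v)]$ against $K_{k-1}$; here too the sticking point is ruling out a $K_{k-1}$ in $N(v)$, that is, a $K_k$ through $v$, which again is exactly what the color-critical edge is there to provide.
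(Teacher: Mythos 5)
First, a point of order: the paper does not prove this theorem at all --- it is quoted from Ma and Qiu \cite{MR4014346} --- so your attempt can only be judged against what a complete proof actually requires. Your skeleton is the right one, and indeed it is essentially the route of the original proof: $T_{k-1}(n)$ is $F$-free because $\chi(F)=k>k-1$; the upper bound reduces, via Theorem \ref{Kr is k turan good}, to showing the extremal graph $G$ is $K_k$-free (you show the stronger statement that it is $(k-1)$-partite); and $(k-1)$-partiteness is to be forced by stability, cleaning, and an embedding of $F$ that places the color-critical edge $xy$ on a hypothetical internal edge $ab$. That final embedding step, as you write it, is sound: in any proper $(k-1)$-coloring of $F-e$ the vertices $x,y$ share a class, all other edges of $F-e$ cross between classes, and a greedy completion works once every chosen vertex misses only $o(n)$ vertices of each other part.

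However, there are two genuine gaps, both of which you flag but neither of which you close. (a) The stability step is not a black box you may invoke; it is the technical heart of the theorem being proved. What is needed is: an $F$-free graph with $\mathcal{N}(K_r,G)\ge\mathcal{N}(K_r,T_{k-1}(n))$ can be made $(k-1)$-partite by deleting $o(n^2)$ edges. The known route is nontrivial: since $\chi(F)=k$, $F$ embeds in a blow-up of $K_k$, so supersaturation forces $\mathcal{N}(K_k,G)=o(n^k)$; the removal lemma then yields a $K_k$-free subgraph $G'$ with $\mathcal{N}(K_r,G')\ge\mathcal{N}(K_r,G)-o(n^r)$; and one then still needs a stability version of Zykov's theorem for $G'$. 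None of this appears in your write-up, and without it the argument does not start. (b) Your ``standard cleaning step'' is not standard for an \emph{exact} result with general $F$. Deleting an atypical vertex changes $n$; the usual repair --- replacing it by a duplicate of a typical vertex --- is only automatically safe when $F$ is a clique, because duplicating a vertex can create a copy of a non-complete $F$. (Concretely: a triangle with a pendant vertex is $C_5$-free, but cloning its degree-$3$ vertex creates a $C_5$, and $C_5$ has a color-critical edge.) Moreover, your embedding contradiction only excludes internal edges whose endpoints are both typical, so the $o(n)$ possibly atypical vertices, and all copies of $K_k$ and of $K_r$ through them, must be eliminated by a quantitative counting argument with explicit (not $o(\cdot)$) error terms; this is where the bulk of the work in \cite{MR4014346} lies. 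Your fallback induction sketch has exactly the same hole: excluding a $K_{k-1}$ inside $N(v)$ is the whole difficulty, not a detail.
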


Gy\H{o}ri, Pach and Somonovits \cite{MR1105465} showed that a bipartite graph $H$ on $m\ge3$ vertices containing $\lfloor m/2\rfloor$ independent edges is $3$-Tur\'an-good. This implies that path $P_l$ with $l$ vertices, the even cycle $C_{2l}$ and the Tur\'an graph $T_2(m)$ are all $3$-Tur\'an-good. They also gave the following general theorem.

\begin{theorem}[\cite{MR1105465}]
Let $r\ge3,$ and let $H$ be a $(k-1)$-partite graph with $m > k-1$ vertices, containing $\lfloor m/(k-1)\rfloor$ vertex disjoint copies of $K_{k-1}.$ Suppose further that for any two vertices $u$ and $v$ in the same component of $H,$ there is a sequence $A_1,\ldots,A_s$ of $(k-1)$-cliques in $H$ such that $u\in A_1, v\in A_s,$ and for any $i < s,$ $A_i$ and $A_{i+1}$ share $k - 2$ vertices. Then $H$ is $k$-Tur\'an-good.
\end{theorem}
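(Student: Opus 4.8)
The plan is to combine a Zykov-type symmetrization (to reduce to complete $(k-1)$-partite host graphs) with a convexity analysis of the resulting count (to single out the balanced partition $T_{k-1}(n)$). I would begin by extracting two structural consequences of the hypotheses. First, fix a proper $(k-1)$-colouring of $H$. The clique-connectivity hypothesis forces this colouring to be unique on each connected component up to a permutation of the $k-1$ colours: every vertex lies in a $(k-1)$-clique, which must be rainbow, and if two $(k-1)$-cliques share $k-2$ vertices then the colour of the one remaining vertex is determined, so colours propagate uniquely along the guaranteed clique sequences. Second, the $\lfloor m/(k-1)\rfloor$ vertex-disjoint copies of $K_{k-1}$ live inside the components, and a short counting argument (if $H_j$ on $m_j$ vertices contains $d_j$ of them then $\sum_j d_j \le \sum_j \lfloor m_j/(k-1)\rfloor \le \lfloor m/(k-1)\rfloor$, with equality forced) shows that each component carries a near-perfect $K_{k-1}$-factor; in particular every colour class of $H_j$ has at least $\lfloor m_j/(k-1)\rfloor$ vertices and $\sum_j \lfloor m_j/(k-1)\rfloor=\lfloor m/(k-1)\rfloor$. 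These two facts are what make the Turán graph, rather than some lopsided multipartite graph, optimal.

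Next I would record the count in a host. If $G$ is the complete $(k-1)$-partite graph with part sizes $n_1,\dots,n_{k-1}$, then by rigidity every embedding of a component $H_j$ sends its colour classes bijectively onto $k-1$ of the parts according to some permutation, so to leading order $\mathcal{N}(H,G)$ equals a constant times $\prod_j \sum_{\sigma} \prod_c n_{\sigma(c)}^{a^{(j)}_c}$, where $a^{(j)}_c$ is the size of colour class $c$ in $H_j$. Writing $a^{(j)}_c = \lfloor m_j/(k-1)\rfloor + b^{(j)}_c$ and pulling out the common factor component by component, this leading coefficient becomes $e_{k-1}(n_1,\dots,n_{k-1})^{\lfloor m/(k-1)\rfloor}\, Q(n_1,\dots,n_{k-1})$, where $e_{k-1}=\prod_i n_i$ and $Q$ is a symmetric polynomial of total degree $\sum_j (m_j \bmod (k-1)) = m \bmod (k-1) < k-1$. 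Specialising to the balanced parts gives the value $\mathcal{N}(H,T_{k-1}(n))$ that we must show is extremal.

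For the upper bound I would take an $n$-vertex $K_k$-free graph $G$ maximising $\mathcal{N}(H,G)$ and symmetrize: for non-adjacent $u,v$ with $N(u)\ne N(v)$, replace one neighbourhood by the other, which preserves $K_k$-freeness (the two vertices become a non-edge with identical neighbourhoods) and, after choosing the direction that does not decrease the count, pushes $G$ towards a complete multipartite graph; being $K_k$-free it then has at most $k-1$ parts. The delicate point—and the step I expect to be the main obstacle—is verifying that the symmetrization never decreases $\mathcal{N}(H,\cdot)$: copies of $H$ using both $u$ and $v$ must be tracked, and one has to argue, via a convexity comparison between cloning $u$ onto $v$ and $v$ onto $u$ (using that such copies are of lower order and that the two choices cannot both strictly hurt), that some admissible move is non-decreasing, so that the process terminates at a complete $(k-1)$-partite graph with at least as many copies of $H$ as $G$.

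Finally I would optimise the part sizes, showing that among complete $(k-1)$-partite graphs on $n$ vertices the balanced one maximises $e_{k-1}^{\lfloor m/(k-1)\rfloor} Q$. Here the hypotheses pay off: $e_{k-1}=\prod_i n_i$ is Schur-concave, and because $Q$ has total degree strictly below $k-1$ the large power of $e_{k-1}$ dominates, so that $e_{k-1}^{\lfloor m/(k-1)\rfloor} Q$ is Schur-concave on the positive orthant and hence maximised at $n_1=\dots=n_{k-1}$; equivalently, a local move transferring a vertex from a larger part to a smaller one strictly increases the count unless the parts are already as equal as possible. Establishing this Schur-concavity cleanly, rather than case by case, is the second technical hurdle, and this is exactly where the balance of colour classes (and hence hypothesis on the $K_{k-1}$-factor) is indispensable, since a class of size $0$ or an unbalanced exponent vector can make the analogous monomial Schur-convex. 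Combining strict optimality at the balanced point with the fact that for $n$ large the degree-$m$ leading term dominates all lower-order terms yields $ex(n,H,K_k)=\mathcal{N}(H,T_{k-1}(n))$, i.e. $H$ is $k$-Turán-good.
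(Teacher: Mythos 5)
The paper never proves this statement---it is quoted as a known theorem from \cite{MR1105465}---so there is no internal proof to compare against, and your proposal must stand on its own. It does not, for two reasons. The first is the symmetrization step. Zykov symmetrization is safe for counting \emph{cliques} precisely because no clique contains two non-adjacent vertices; for a general $H$, copies containing both $u$ and $v$ spoil the bookkeeping. Writing $a,b,c,d$ for the numbers of copies of $H$ containing only $u$, only $v$, both, and neither, cloning $u$ onto $v$ produces $d+2a+c'$ copies and cloning $v$ onto $u$ produces $d+2b+c''$ copies, where $c',c''$ count copies through both vertices in the modified graphs; assuming both moves strictly lose yields only $c'+c''<2c$, which is no contradiction at all. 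So your parenthetical claim that ``the two choices cannot both strictly hurt'' is exactly the point at issue, and you give no argument for it; note also that this step never invokes the hypotheses on $H$, and that even non-decreasing moves do not by themselves force the process to terminate in a complete multipartite graph.

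The second problem is fatal: the Schur-concavity claim in your last step is false. Having $\deg Q<k-1$ does not make $\bigl(\prod_i y_i\bigr)^{D}Q$ Schur-concave when $D=\lfloor m/(k-1)\rfloor$ is small; roughly one needs $D\geq \deg Q\,(\deg Q-1)/(k-1)$. A concrete counterexample satisfying \emph{all} the quoted hypotheses is $H=K_{5,1,1,1,1}$ with $k=6$: it is $5$-partite on $m=9>5$ vertices, it contains $\lfloor 9/5\rfloor=1$ copy of $K_5$, and the clique-chain condition holds because every $5$-clique of $H$ consists of the four singleton classes plus one vertex of the large class, so any two such cliques share $k-2=4$ vertices. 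Here $D=1$, $Q=\sum_i y_i^4$, and $\bigl(\prod_i y_i\bigr)Q$ has a saddle, not a maximum, at the balanced point of the simplex: in a complete $5$-partite host one has $\mathcal{N}(H,\cdot)=\sum_i\binom{y_i}{5}\prod_{j\neq i}y_j$, and part sizes $(t+1,t-1,t,t,t)$ beat $T_5(5t)$ by roughly $7t^7/120$ copies (for $n=500$ the excess is about $5.7\times 10^{12}$), while part sizes proportional to $(4,3,3,3,3)$ beat it by a constant factor. In particular $K_{5,1,1,1,1}$ is not $6$-Tur\'an-good, so the statement as transcribed here is itself false and no proof of it can exist; the original theorem must be read with a stronger hypothesis, namely that the disjoint copies of $K_{k-1}$ cover all but at most one vertex of $H$ (for $k=3$ this is exactly what ``$\lfloor m/2\rfloor$ independent edges'' says), which forces the colour classes of $H$ to be nearly balanced and is precisely what your final step needs. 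The instances actually used elsewhere in the paper (paths, even cycles, $K_{2,3}$, and $T_{k-1}(m)$) satisfy this stronger condition, so the paper's applications are unaffected.
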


\begin{corollary}[\cite{MR1105465}]\label{paths are 3 Turan good}
  Paths, even cycles and $K_{2,3}$ are $3$-Tur\'an-good and $T_{k-1}(m)$ is $k$-Tur\'an-good.
\end{corollary}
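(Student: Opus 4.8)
The plan is to obtain all four statements as direct applications of the preceding theorem of Győri, Pach and Simonovits, by checking that each candidate graph $H$ satisfies its hypotheses. For paths, even cycles and $K_{2,3}$ I would take $k=3$, so that $k-1=2$ and a $(k-1)$-clique is simply an edge. In this specialization the clique-chain condition asks that any two vertices of a common component be joined by a sequence of edges in which consecutive edges meet in exactly one vertex, which is nothing more than the existence of a walk between them; hence it holds automatically whenever $u$ and $v$ lie in the same component. Thus for $k=3$ the only remaining requirements are that $H$ be bipartite, have $m>2$ vertices, and contain a matching of size $\lfloor m/2\rfloor$.

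I would then dispatch the three bipartite cases one at a time. Both paths and even cycles are bipartite; $P_l$ with $l>2$ contains a matching of size $\lfloor l/2\rfloor$ obtained by selecting every other edge, and $C_{2l}$ contains a perfect matching of size $l=\lfloor 2l/2\rfloor$. For $K_{2,3}$ we have $m=5>2$, it is bipartite by definition, and its maximum matching has size $2=\lfloor 5/2\rfloor$ (one edge incident to each vertex of the part of size two). Each graph therefore meets the hypotheses for $k=3$, proving that paths, even cycles and $K_{2,3}$ are $3$-Turán-good.

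For the Turán graph $T_{k-1}(m)$ with $m>k-1$ I would verify the hypotheses for general $k$. It is complete $(k-1)$-partite by definition, so writing its parts as $P_1,\dots,P_{k-1}$, every $(k-1)$-clique consists of one vertex chosen from each part; consequently the maximum number of vertex-disjoint copies of $K_{k-1}$ equals the size of the smallest part, namely $\lfloor m/(k-1)\rfloor$, which supplies the required family of disjoint cliques. For the clique-chain condition, observe that $T_{k-1}(m)$ is connected, and given vertices $u,v$: if they lie in different parts then a single $K_{k-1}$ already contains both; if they lie in the same part, I would first extend $u$ to a clique $A_1$ by picking one vertex in each of the other $k-2$ parts, and then replace $u$ by $v$ to form a clique $A_2$, so that $A_1$ and $A_2$ share the remaining $k-2$ vertices. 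All hypotheses then hold, giving that $T_{k-1}(m)$ is $k$-Turán-good.

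I do not anticipate a genuine obstacle, since the corollary is a straightforward specialization of the quoted theorem rather than a new argument; the only point demanding mild care is the clique-chain condition for $T_{k-1}(m)$ when $u$ and $v$ share a part, which is exactly why I isolate the vertex-swapping step that produces two cliques overlapping in $k-2$ vertices.
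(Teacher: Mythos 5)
Your proposal is correct and is essentially the derivation the paper intends: Corollary \ref{paths are 3 Turan good} is stated without proof as a consequence of the quoted Gy\H{o}ri--Pach--Simonovits theorem, and your verification of its hypotheses (the $k=3$ specialization for paths, even cycles and $K_{2,3}$, and the clique-swapping argument for $T_{k-1}(m)$) is exactly the intended check. The only trivial cases you leave aside ($P_2$, or $T_{k-1}(m)$ with $m\le k-1$, where the graph is complete) are covered directly by Tur\'an's theorem and Theorem \ref{Kr is k turan good}.
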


Recently, Gerbner and Palmer \cite{2020arXiv200603756G} obtained a similar result, which gives a new class of $k$-Tur\'an-good graphs.
\begin{theorem}[\cite{2020arXiv200603756G}]\label{thm of H+K-1_is k turan good}
  Let $H$ be a $k$-Tur\'an-good graph. Let $H'$ be any graph constructed from $H$ in the following way. Choose a complete subgraph of $H$ with vertex set $X,$ add a vertex-disjoint copy of $K_{k-1}$ to $H$ and join the vertices in $X$ to the vertices of $K_{k-1}$ by edges arbitrarily. Then  $H'$ is $k$-Tur\'an-good.
\end{theorem}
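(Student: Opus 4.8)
The plan is to dispose first of the trivial direction and a partiteness check, then to set up an extension-counting identity that isolates the added $K_{k-1}$, and finally to compare this identity term-by-term with the corresponding count in the Tur\'an graph, where the comparison step is the real obstacle.

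\medskip

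\emph{Step 1 (trivial bound and partiteness).} Since $T_{k-1}(n)$ is $K_k$-free, $ex(n,H',K_k)\ge \mathcal{N}(H',T_{k-1}(n))$, so only the upper bound requires work. For the statement to be meaningful I first record that $H'$ embeds into $T_{k-1}(n)$, i.e. that $H'$ is $(k-1)$-partite. As $H$ is $k$-Tur\'an-good it has copies in $T_{k-1}(n)$, hence is $(k-1)$-partite; fix a proper colouring and relabel so that the clique $X=\{x_1,\ld,x_t\}$ receives the distinct colours $1,\ld,t$. It remains to colour the added clique $W_0=\{w_1,\ld,w_{k-1}\}$ bijectively with the $k-1$ colours so that every prescribed edge between $X$ and $W_0$ is properly coloured (write $x_i\sim w_j$ for such an edge). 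By Hall's theorem a suitable bijection fails only if some $T\s W_0$ has fewer than $|T|$ admissible colours; the colours forbidden for every vertex of $T$ are exactly those of the set $Y\s X$ of vertices adjacent to all of $T$, so failure means $|T|+|Y|\ge k$. But then $Y\cup T$ is a clique of size at least $k$ in $H'$, contradicting that $H'$ is $K_k$-free. Hence $H'$ is $(k-1)$-partite.

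\medskip

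\emph{Step 2 (extension counting).} Let $G$ be any $n$-vertex $K_k$-free graph. Each copy of $H'$ in $G$ splits canonically into the image of the core $H$, whose distinguished clique $X$ lands on an ordered $t$-clique $S=(s_1,\ld,s_t)$, together with the image $W$ of the added $K_{k-1}$, an ordered $(k-1)$-clique whose $j$-th vertex must lie in $\bigcap_{i:\,x_i\sim w_j}N_G(s_i)$. Writing $h(S)$ for the number of copies of $H$ placing $X$ on $S$ and $g(S)$ for the number of admissible ordered cliques $W$, and absorbing the automorphism factors and the (lower-order) correction for overlaps between the two parts into constants, I obtain
\begin{equation*}
\mathcal{N}(H',G)=c\sum_{S}h(S)\,g(S)+O(n^{|V(H')|-1}),
\end{equation*}
the sum being over ordered $t$-cliques $S$ of $G$. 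Two marginal controls are then available: $\sum_S h(S)=c'\,\mathcal{N}(H,G)\le c'\,\mathcal{N}(H,T_{k-1}(n))$ because $H$ is $k$-Tur\'an-good, and since every admissible $W$ is a $(k-1)$-clique the total number of attachments is at most $(k-1)!\,\mathcal{N}(K_{k-1},G)\le(k-1)!\,\mathcal{N}(K_{k-1},T_{k-1}(n))$ by Theorem~\ref{Kr is k turan good}.

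\medskip

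\emph{Step 3 (comparison with the Tur\'an graph, the main obstacle).} The target is $\sum_S h(S)g(S)\le\sum_S h_T(S)g_T(S)=\mathcal{N}(H',T_{k-1}(n))/c$, where $h_T,g_T$ denote the same functionals on $T_{k-1}(n)$. The difficulty is that this does \emph{not} follow from the two marginal bounds of Step 2: both $h(S)$ and $g(S)$ depend on how the clique $S$ meets the colour classes, so one must rule out a graph $G$ that sacrifices a few copies of $H$ in order to inflate the number $g(S)$ of attachable cliques on the survivors. To break this coupling I would reduce to the structured case. A supersaturation and stability argument shows a maximiser has near-extremal $K_{k-1}$-count, hence is within $o(n^2)$ edges of $T_{k-1}(n)$, and a cleaning step then lets one take the maximiser to be complete $(k-1)$-partite for $n$ large. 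On such a $G$ with parts $V_1,\ld,V_{k-1}$ each $N_G(u)$ is the union of all parts other than $u$'s own, so $g(S)$ and $h(S)$ become explicit symmetric polynomials in the part sizes and the constraint becomes a clean colour-avoidance condition; a convexity (smoothing) computation then forces equal part sizes, matching the balancing $\lc n/(k-1)\rc,\lf n/(k-1)\rf$ of $T_{k-1}(n)$. The genuinely hard point is this reduction to complete multipartite graphs, since for a non-clique target $H'$ naive symmetrisation need not preserve $\mathcal{N}(H',\cdot)$: the Tur\'an-goodness of $H$ is precisely what is invoked to push it through, by applying $\mathcal{N}(H,G)\le\mathcal{N}(H,T_{k-1}(n))$ to the core and, if needed, strengthening the induction to a rooted (colour-constrained) Tur\'an-good statement that controls $h(S)$ class-by-class.
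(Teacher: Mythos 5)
Your Steps 1 and 2 are sound: the Hall-type argument that $H'$ is $(k-1)$-partite is correct (granting the implicit hypothesis that $H'$ is $K_k$-free), and the extension-counting identity is the same bookkeeping as this paper's $S_{G_1,G_2}(G)$/$f_G$ formalism. (For the record, this paper does not prove Theorem \ref{thm of H+K-1_is k turan good} at all --- it quotes it from \cite{2020arXiv200603756G}; what the paper does prove by its own methods are the analogous Propositions \ref{K1+Kl} and \ref{K2+Kk-2}, via pairwise injections, and Proposition \ref{Kl+Km}, via symmetrization.) The problem is Step 3, which you yourself flag as ``the real obstacle'': it is a research plan, not a proof, and its three load-bearing sub-claims are each unjustified, with at least one unavailable in principle.

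Concretely: (i) near-extremality of the $K_{k-1}$-count for a maximiser of $\mathcal{N}(H',\cdot)$ does not follow from your marginal bounds. Decoupling via $\sum_S h(S)g(S)\le \max_S g(S)\cdot\sum_S h(S)\le C\,\mathcal{N}(K_{k-1},G)\,\mathcal{N}(H,G)$ loses a multiplicative constant (the attachment constraints cut the Tur\'an-graph count by a constant factor, not a lower-order term), so maximality of $\mathcal{N}(H',G)$ only forces $\mathcal{N}(K_{k-1},G)$ to be within a constant factor of extremal --- far too weak for any stability theorem, whose savings are themselves only $(1-o(1))$-type. Moreover, upgrading ``within $o(n^2)$ edges of $T_{k-1}(n)$'' to ``exactly complete $(k-1)$-partite'' is the hard, exact part of any such argument and is simply asserted. (ii) Even restricted to complete $(k-1)$-partite hosts, balanced parts do \emph{not} maximize the count of an arbitrary $(k-1)$-partite graph (large stars are the classical counterexample already for $k=3$), so your ``convexity computation'' must exploit the specific structure of $H'$; but the core $H$ is an \emph{arbitrary} Tur\'an-good graph, so there is no explicit polynomial in the part sizes to smooth --- this is exactly why the paper's Proposition \ref{Kl+Km} carries out such a computation only for disjoint unions of cliques, where $\mathcal{N}(H,\cdot)$ factorizes. (iii) Your proposed rescue, a ``rooted (colour-constrained) Tur\'an-good'' statement controlling $h(S)$ clique-by-clique, is not implied by the hypothesis: Tur\'an-goodness of $H$ is the single global inequality $\mathcal{N}(H,G)\le\mathcal{N}(H,T_{k-1}(n))$, and the rooted strengthening is a genuinely stronger, unproved (possibly false) assertion, so invoking it is circular. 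The known proofs sidestep the $h$--$g$ coupling entirely: as in Propositions \ref{K1+Kl} and \ref{K2+Kk-2}, one classifies the pairs (core, attached clique) by adjacency pattern and builds an injection from pairs in $G$ to pairs in $T_{k-1}(n)$ that dominates induced subgraphs, using Theorem \ref{Kr is k turan good} together with the rigidity that in a $K_k$-free graph every outside vertex has at most $k-2$ neighbours in any copy of $K_{k-1}$, while in $T_{k-1}(n)$ it has exactly $k-2$. Comparing pair-by-pair, rather than comparing the two factors of a product separately, is the missing idea.
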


And, Gerbner \cite{2020arXiv200616150G} also gave the following result.
\begin{theorem}[\cite{2020arXiv200616150G}]\label{Ml is k turan good}
  $M_l$ is $F$-Tur\'an-good for every $F$ with a color-critical edge.
\end{theorem}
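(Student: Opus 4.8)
The plan is to set $k = \chi(F)$ and prove the equivalent statement that, for all sufficiently large $n$, the Tur\'an graph $T_{k-1}(n)$ is the \emph{unique} $n$-vertex $F$-free graph maximizing $\mathcal{N}(M_l, G)$. The engine driving everything is that, since $F$ has a color-critical edge, the Simonovits theorem applies: for $n$ large, $T_{k-1}(n)$ is the unique $F$-free graph on $n$ vertices with the maximum number of edges, and $ex(n,F) = e(T_{k-1}(n))$. In particular the case $l=1$ is exactly this statement, since $\mathcal{N}(M_1, G) = e(G)$; the task is to promote this first-order (edge) information to a statement about $l$-matchings.

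The heart of the argument is an inclusion--exclusion expansion of the matching count over the edge set. Writing $m = e(G)$ and letting $c(G) = \sum_{v} \binom{\deg_G v}{2}$ count the pairs of adjacent edges, one has
\begin{equation*}
\mathcal{N}(M_l, G) = \binom{m}{l} - c(G)\binom{m-2}{l-2} + R(G),
\end{equation*}
where $R(G)$ collects the higher-order inclusion--exclusion terms coming from configurations of three or more clustered edges. I would first record that for the dense graphs in play ($m = \Theta(n^2)$) the leading term is $\binom{m}{l} = \Theta(n^{2l})$, the second term is $\Theta(n^{2l-1})$, and $R(G)$ is of strictly smaller order still. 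The upshot is that $\mathcal{N}(M_l, G)$ is, to first order, an increasing function of the number of edges, with a second-order penalty measured by $c(G)$.

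With this in hand I would compare any $F$-free $G$ with $T_{k-1}(n)$. Set $m^* = e(T_{k-1}(n))$ and $t = m^* - e(G) \ge 0$. Two features make $T_{k-1}(n)$ a \emph{simultaneous} optimizer: it has the most edges (Simonovits), maximizing the leading term $\binom{m}{l}$; and, being almost regular, it very nearly minimizes $c(G)$ for its edge count, by convexity of $d \mapsto \binom{d}{2}$. Quantitatively, each missing edge costs $\Theta(n^{2l-2})$ in the leading term, whereas the largest possible compensating saving in the second term is only $\Theta(n^{2l-3})$ per missing edge: reducing the edge count by $t$ can lower $c(G)$ relative to $T_{k-1}(n)$ by at most $\Theta(nt)$, since the regular minimum itself drops only this much. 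Hence for every $t \ge 1$ the first-order edge loss dominates, giving $\mathcal{N}(M_l,G) < \mathcal{N}(M_l, T_{k-1}(n))$; and if $t = 0$ then $G = T_{k-1}(n)$ by the uniqueness clause of Simonovits' theorem. This would yield the theorem.

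The main obstacle is the \emph{near-extremal regime}, where $e(G)$ is within a bounded number of edges of $m^*$ but $G \neq T_{k-1}(n)$: there the first-order edge loss, the second-order difference in $c(\cdot)$, and the difference of the remainders $R(\cdot)$ are all of comparable magnitude, so the crude order-of-magnitude bookkeeping no longer separates them and constants must be tracked. I would resolve this by invoking Simonovits stability to conclude that any near-optimal $G$ differs from $T_{k-1}(n)$ in only $o(n^2)$ edges, and then running a local exchange argument: any missing edge $uv$ of the intended complete multipartite target can be filled in, and any pair of unequal parts rebalanced, with the identity $\mathcal{N}(M_l, G+uv) - \mathcal{N}(M_l, G) = \mathcal{N}(M_{l-1}, G - u - v) = \Theta(n^{2l-2}) > 0$ showing each such move strictly increases the matching count. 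Verifying that these moves terminate exactly at $T_{k-1}(n)$, and that the remainders $R(\cdot)$ in the two expansions cancel to sufficient precision, is the technical crux of the proof.
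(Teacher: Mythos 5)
Your proposal has a genuine gap, and it sits exactly at the point you yourself flag as ``the technical crux.'' The order-of-magnitude part of your argument (Simonovits' edge theorem plus the expansion $\mathcal{N}(M_l,G)=\binom{m}{l}-c(G)\binom{m-2}{l-2}+R(G)$) does separate $G$ from $T_{k-1}(n)$ when the edge deficit $t=e(T_{k-1}(n))-e(G)$ is large, but for bounded $t$ the remainder difference $R(T_{k-1}(n))-R(G)=O(n^{2l-2})$ is of the same order as the first-order loss $\Theta(tn^{2l-2})$, so nothing is proved there; and your patch for this regime does not work as stated. The step ``any missing edge $uv$ of the intended complete multipartite target can be filled in'' is false for $F$-free graphs: since $F$ has a color-critical edge, $F$ is contained in a complete $(k-1)$-partite graph plus one edge inside a part, so precisely when $G$ retains an edge inside a part of the stability partition, adding missing cross edges will in general create a copy of $F$. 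To legalize the exchange you must first delete the intra-part edges, and each such deletion loses $\mathcal{N}(M_{l-1},G-u-v)=\Theta(n^{2l-2})$ matchings---the same order as each gain---so the process is not monotone and ``each such move strictly increases the matching count'' fails for the process as a whole. Note also that in your write-up the color-critical hypothesis is used only through Simonovits' theorem; the real proof needs the chromatic structure a second time, exactly to control this exchange step.

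For comparison: this theorem is quoted from \cite{2020arXiv200616150G}, and the paper does not reprove it, but it reproduces Gerbner's method in the proof of Proposition \ref{Kl+Km} (the authors state the proofs are the same). That method avoids your near-extremal trap entirely by a dichotomy on $\chi(G)$ rather than on the edge deficit. If $\chi(G)\le k-1$, one may add \emph{all} missing edges at once to make $G$ complete $(k-1)$-partite---this never creates $F$ because the resulting graph still has chromatic number at most $k-1<\chi(F)$---and then a local rebalancing argument (moving one vertex from a larger part to a smaller one, comparing the lost and gained copies term by term) shows $T_{k-1}(n)$ is optimal among complete $(k-1)$-partite graphs. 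If $\chi(G)\ge k$, the Erd\H{o}s--Simonovits theorem \cite{MR342429} supplies a vertex of degree at most $(1-\frac{1}{k-7/3})n$, which forces a deficit of polynomial order ($\Omega(n^{2l-1})$ against an $O(n^{2l-2})$ slack for $T_{k-1}(n)$), so crude counting suffices and no delicate cancellation of error terms is ever needed. If you want to salvage your approach, the cleanest fix is to adopt this dichotomy in place of stability-plus-exchanges; as written, the proposal's decisive step is unproven and partly incorrect.
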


For more results about $k$-Tur\'an-good graphs, we refer the readers to \cite{2020arXiv200616150G}, \cite{2020arXiv200603756G}, and their references.

Furthermore, Gerbner and Palmer raised the following conjectures.
\begin{conjecture}[\cite{2020arXiv200603756G}]\label{conj Pl is k turan good}
  For every pair of integers $l$ and $k,$ the path $P_l$ is $k$-Tur\'an-good.
\end{conjecture}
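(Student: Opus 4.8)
The plan is to follow the two-phase strategy that underlies essentially all known Tur\'an-good results: first reduce an arbitrary extremal configuration to a complete $(k-1)$-partite graph, and then optimize the number of copies of $P_l$ over the part sizes. Since paths are already known to be $3$-Tur\'an-good (Corollary~\ref{paths are 3 Turan good}), only $k\ge 4$ needs attention. Fix $k$ and $l$, let $n$ be large, and let $G$ be an $n$-vertex $K_k$-free graph maximizing $\mathcal{N}(P_l,G)$. For the first phase I would run a Zykov-type symmetrization: for any two non-adjacent vertices $u,v$, compare the number of copies of $P_l$ through $u$ and through $v$, and replace the neighborhood of the weaker vertex by a copy of the neighborhood of the stronger one, keeping $u,v$ non-adjacent. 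This operation never creates a $K_k$, since it only duplicates an existing neighborhood and hence cannot raise the clique number (a $K_k$ through $v$ would give one through $u$ in the original graph), and ``non-adjacency'' becomes an equivalence relation, so after finitely many steps $G$ is complete multipartite; $K_k$-freeness then forces at most $k-1$ parts. As $T_{k-1}(n)$ is itself complete $(k-1)$-partite, it then suffices to prove that among all complete $(k-1)$-partite graphs on $n$ vertices the balanced one maximizes $\mathcal{N}(P_l,\cdot)$.

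For the second phase I would write down an explicit formula for $\mathcal{N}(P_l,\cdot)$ in a complete $(k-1)$-partite graph with part sizes $n_1,\dots,n_{k-1}$. A copy of $P_l$ is a sequence of $l$ distinct vertices in which consecutive vertices lie in distinct parts; grouping by the pattern $\pi$ recording which part each path-position belongs to, the count becomes
\begin{equation*}
  \mathcal{N}(P_l,G)=\frac{1}{2}\sum_{\pi}\prod_{i=1}^{k-1}(n_i)_{a_i(\pi)},
\end{equation*}
where $\pi$ ranges over proper sequences (consecutive entries distinct), $a_i(\pi)$ is the number of positions assigned to part $i$, and $(x)_a$ denotes the falling factorial. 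This is a symmetric polynomial in $n_1,\dots,n_{k-1}$, and I would argue it is maximized under $\sum_i n_i=n$ at the balanced partition $n_i\in\{\lceil n/(k-1)\rceil,\lfloor n/(k-1)\rfloor\}$. Concretely it suffices to show that transferring one unit from a larger part to a smaller part never decreases the sum, which reduces to a finite comparison of the pattern contributions (and one should double-check the sign of this discrete second difference, since for paths the relevant symmetric function need not be concave for every $l$ without argument).

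The main obstacle is the symmetrization step. Unlike cliques or matchings, a copy of $P_l$ is a genuinely non-local object, and a single symmetrization move can simultaneously destroy and create many copies that use both $u$ and $v$ as (necessarily non-adjacent) path-vertices; controlling these cross-terms so that the total count provably does not decrease is exactly where a clean general argument breaks down. This is presumably why the present paper establishes the result only for $P_4$ and $P_5$ with $k\ge 4$, where the patterns $\pi$ are few enough that the cross-terms can be settled by direct computation. A successful attack on the full conjecture would need either a monotonicity statement for $\mathcal{N}(P_l,\cdot)$ under symmetrization that is uniform in $l$, or a stability-based route that first shows an extremal $G$ is within $o(n^2)$ edges of $T_{k-1}(n)$ and then upgrades approximate optimality to exact optimality by a local swapping analysis.
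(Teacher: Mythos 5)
Your proposal does not prove the statement, and you in fact concede as much in your final paragraph: the symmetrization step, which is the entire load-bearing part of the plan, is exactly what is missing. Concretely, Zykov symmetrization is sound for cliques because a copy of $K_r$ can never contain two non-adjacent vertices $u,v$; hence the count splits as $\mathcal{N}(K_r,G)=N_u+N_v+N_0$ (copies through $u$, through $v$, through neither), and replacing the neighborhood of the weaker vertex by that of the stronger one replaces $N_u+N_v$ by $2\max\{N_u,N_v\}$ while leaving $N_0$ untouched. For $P_l$ there is a fourth term $N_{uv}$, the copies of $P_l$ using both $u$ and $v$ as (non-adjacent) path vertices, and after the cloning move this term is replaced by a completely different quantity $N'_{uv}$ counting paths through two vertices that now have identical neighborhoods. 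Nothing in the comparison of $N_u$ with $N_v$ controls $N'_{uv}-N_{uv}$, so monotonicity of $\mathcal{N}(P_l,\cdot)$ under the move is unproven; this is not a technicality but the substance of the conjecture, which is due to Gerbner and Palmer and remains open. Your second phase is also incomplete: even granting the reduction to complete $(k-1)$-partite graphs, you only assert (and explicitly defer checking) that the balanced partition maximizes your symmetric-polynomial expression, and there is a further termination issue in arguing that finitely many symmetrization steps yield a complete multipartite graph when the count can stay equal rather than strictly increase.

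It is also worth being clear that the paper itself does not prove this statement --- it is quoted as Conjecture \ref{conj Pl is k turan good} precisely because it is open --- and the partial results it does prove ($P_4$ and $P_5$ are $k$-Tur\'an-good for $k\ge4$; the case $k=3$ being Corollary \ref{paths are 3 Turan good}) use a method entirely different from yours. Rather than symmetrizing, the paper lists all induced $4$- or $5$-vertex subgraphs containing the path, bounds $\mathcal{N}(P_l,G)$ by a fixed nonnegative linear combination of counts of auxiliary graphs already known to be $k$-Tur\'an-good (e.g.\ $M_2$ via Theorem \ref{Ml is k turan good}, disjoint unions of cliques via Propositions \ref{K1+Kl} and \ref{Kl+Km}, and the graphs of Proposition \ref{K2+Kk-2}), and then verifies that this linear combination is an identity on $T_{k-1}(n)$, so the bound is tight there. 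This sidesteps the cross-term problem entirely but, as the paper notes in its concluding remarks, does not scale to $P_l$ for $l\ge6$ because the number of relevant induced subgraphs explodes. So your diagnosis of where the difficulty lies is accurate, but the proposal as written has a genuine, acknowledged gap and does not constitute a proof of the conjecture.
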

\begin{conjecture}[\cite{2020arXiv200603756G}]\label{conj pl and cl is cycle turan good}
  The path $P_k$ and the even cycle $C_{2k}$ are $C_{2l+1}$-Tur\'an-good.
\end{conjecture}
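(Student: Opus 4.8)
Here $\chi(C_{2l+1})=3$, so the conjectured optimum is $T_2(n)=K_{\lf n/2\rf,\lc n/2\rc}$, the balanced complete bipartite graph; since $P_k$ and $C_{2k}$ are bipartite, writing $h=|V(H)|\in\{k,2k\}$ we have $\mathcal{N}(H,T_2(n))=\Theta(n^{h})$. The case $l=1$ is exactly the Gy\H{o}ri--Pach--Simonovits statement of Corollary~\ref{paths are 3 Turan good}, so the real content is replacing the forbidden triangle $C_3$ by a longer odd cycle $C_{2l+1}$; note that every edge of $C_{2l+1}$ is color-critical, since deleting one leaves a bipartite path, so the conjecture fits the color-critical pattern of the theorems quoted above. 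I would prove it in two stages: an asymptotic stage locating any near-extremal graph within $o(n^2)$ edges of $T_2(n)$, followed by an exact clean-up for $n$ large.

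For the asymptotic stage I would apply Szemer\'edi's regularity lemma to a $C_{2l+1}$-free graph $G$ and pass to its reduced cluster graph, keeping only the dense regular pairs. A triangle of three pairwise dense regular pairs contains a cycle of every length in a long range, in particular $C_{2l+1}$: one traverses the odd triangle once and then zigzags inside a single dense pair to adjust the length by two at a time. Hence the cluster graph is triangle-free, and by the embedding and counting lemmas $\mathcal{N}(H,G)$ is, up to $o(n^{h})$, the number of copies of $H$ in a weighted graph on the clusters whose support is triangle-free. Since $\mathcal{N}(H,\cdot)$ is monotone in the edge weights, this count is at most the corresponding count for a genuine triangle-free graph, which by the triangle-free case (Corollary~\ref{paths are 3 Turan good}) is maximized, asymptotically and uniquely, by the balanced complete bipartite pattern. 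This yields both the asymptotic equality $ex(n,H,C_{2l+1})=(1+o(1))\mathcal{N}(H,T_2(n))$ and the stability conclusion that an extremal $G$ admits a partition $V(G)=A\cup B$ with $|A|,|B|=(\fr12+o(1))n$, with $o(n^2)$ edges inside the parts, and with almost all cross pairs present.

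For the exact stage I take $G$ to be a genuine maximizer with the above partition and show it has no edge inside $A$ or $B$. Suppose $xy\in E(G)$ with $x,y\in A$. The bipartite graph between $A$ and $B$ misses only $o(n^2)$ edges, so if $x$ and $y$ both retained cross-degree $(\fr12-o(1))n$ then a greedy construction would yield an $x$--$y$ path $x,b_1,a_1,\ldots,b_l,y$ of length $2l$ through cross edges, which together with $xy$ is a copy of $C_{2l+1}$; this is forbidden, so at least one endpoint of each within-part edge must miss $\Omega(n)$ cross edges. Since a missing cross edge lies in $\Omega(n^{h-2})$ copies of $H$ whereas a within-part edge lies in only $O(n^{h-2})$ copies, deleting $xy$ and restoring the absent cross edges strictly increases $\mathcal{N}(H,\cdot)$, a contradiction. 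Hence the maximizer is bipartite with parts $A,B$; completing it to $K_{|A|,|B|}$ only adds copies and keeps it bipartite, hence $C_{2l+1}$-free, and rebalancing to $T_2(n)$ increases the count because $\mathcal{N}(H,K_{a,b})$ is maximized at $a=b$ for $H\in\{P_k,C_{2k}\}$. Therefore $G=T_2(n)$.

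The main obstacle is making the exact stage uniform and rigorous. A $C_{2l+1}$-free graph may contain triangles and other short odd cycles and may carry many within-part edges at once, whose forbidden length-$2l$ connections interact; moreover a path realizing $C_{2l+1}$ may itself run through within-part edges rather than only cross edges, so the clean ``each within-part edge costs $\Omega(n)$ cross edges'' bookkeeping must be replaced by a simultaneous argument over all defect edges. One must also exclude every near-extremal non-bipartite competitor allowed by forbidding only the single length $2l+1$---for instance balanced blow-ups of longer odd cycles, which are $C_{2l+1}$-free---which the regularity stage rules out asymptotically but which must be eliminated exactly. Turning the heuristic cost estimate into a valid potential or switching argument that preserves $C_{2l+1}$-freeness at every step is, I expect, where the genuine difficulty lies, and is presumably why the conjecture remains open for $l\ge2$.
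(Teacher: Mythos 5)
There is nothing in this paper to compare your argument against: the statement you were given is Conjecture~\ref{conj pl and cl is cycle turan good}, quoted from Gerbner and Palmer \cite{2020arXiv200603756G}, and this paper proves no part of it. The authors only record the known partial results ($P_3$ for all $l$; $P_4$ and $C_4$ for $l=2$) and state that the full conjecture was recently resolved by Gerbner in the separate paper \cite{2020arXiv201212646G}, whose proof is not reproduced here. So your proposal has to stand on its own, and as written it does not: it is a program (asymptotic/stability stage plus exact stage) together with an explicit list, in your own final paragraph, of the points at which the program is unproven. Incidentally, your closing claim that the conjecture ``remains open for $l\ge2$'' contradicts the paper itself, which asserts it has been solved.

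The concrete gaps are these. First, in the regularity stage you derive stability (an extremal $G$ has a bipartition with $o(n^2)$ defect edges) by invoking Corollary~\ref{paths are 3 Turan good}, but that corollary is an \emph{exact} statement about triangle-free graphs; it says nothing about near-maximizers. To conclude that any $C_{2l+1}$-free graph with close to $\mathcal{N}(H,T_2(n))$ copies of $H$ is structurally close to $T_2(n)$, you need a stability version of the Gy\H{o}ri--Pach--Simonovits theorem for the count $\mathcal{N}(H,\cdot)$, which you never state or prove, and which does not follow formally from uniqueness of the extremal graph. Second, the exact stage is not an argument but a heuristic: deleting one within-part edge $xy$ and ``restoring the absent cross edges'' can create copies of $C_{2l+1}$ through the \emph{other} surviving within-part edges, so the modified graph need not be admissible; the charging scheme (``each within-part edge costs $\Omega(n)$ missing cross edges'') can charge the same missing cross edges to many different within-part edges; and the cycle witnessing a violation may itself traverse within-part edges, so the dichotomy you set up for a single edge $xy$ does not hold uniformly. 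You acknowledge all of this, which is commendable honesty, but it means the proposal establishes at most an asymptotic statement (modulo the standard but unverified counting-lemma details), not the exact result that Tur\'an-goodness requires.
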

In the same paper, they made some progress, i.e., they proved that Conjecture \ref{conj Pl is k turan good} holds for $l=3$ and $k\ge3,$ and it holds asymptotically. The authors also showed that Conjecture \ref{conj pl and cl is cycle turan good} holds for $P_3$ when $l\ge1$ and for $P_4$ and $C_4$ when $l=2.$ They also obtained that if $P_{2k}$ is $k$-Tur\'an-good, then $C_{2k}$ is $k$-Tur\'an-good where $F$ is a $3$-chromatic graph. Recently, Gerbner \cite{2020arXiv201212646G} has solved Conjecture \ref{conj pl and cl is cycle turan good}.

In \cite{2020arXiv200616150G}, Gerbner proved the following case of Conjecture \ref{conj Pl is k turan good}.
\begin{proposition}[\cite{2020arXiv200616150G}]\label{P4 is 4 Turan good}
  $P_4$ is $4$-Tur\'an-good.
\end{proposition}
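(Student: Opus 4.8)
The plan is to reduce the maximization of $\mathcal{N}(P_4,G)$ over $n$-vertex $K_4$-free graphs to a finite optimization over complete tripartite graphs, using the known asymptotics together with a stability argument to pin down the structure of a near-extremal graph. I would begin with the basic counting identity. Every copy of $P_4$ has a unique middle edge, and a copy with middle edge $uv$ is obtained by choosing a neighbor of $u$ and a neighbor of $v$ so that together with $u,v$ they form four distinct vertices; writing $t(uv)$ for the number of triangles through $uv$, this gives
\[
\mathcal{N}(P_4,G)=\sum_{uv\in E(G)}\big[(d(u)-1)(d(v)-1)-t(uv)\big]=\sum_{uv\in E(G)}(d(u)-1)(d(v)-1)-3\,\mathcal{N}(K_3,G).
\]
For the Turán graph this evaluates to $\mathcal{N}(P_4,T_3(n))=\tfrac{4}{27}n^4+O(n^3)$, so the leading term comes entirely from the degree-product sum, while the triangle correction $-3\,\mathcal{N}(K_3,G)$ is of lower order (indeed, $K_4$-freeness forces each $G[N(v)]$ to be triangle-free, so $\mathcal{N}(K_3,G)=\tfrac13\sum_v e(G[N(v)])\le\tfrac1{12}\sum_v d(v)^2=O(n^3)$). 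The asymptotic statement $ex(n,P_4,K_4)=(1+o(1))\mathcal{N}(P_4,T_3(n))$ is the asymptotic form of Conjecture~\ref{conj Pl is k turan good} already established by Gerbner and Palmer, so the task is to upgrade this to an exact identity for large $n$.

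Next I would attempt to reduce to complete tripartite graphs. The natural tool, Zykov symmetrization, does not apply cleanly here: for non-adjacent $u,v$, cloning one onto the other changes $\mathcal{N}(P_4,\cdot)$ by the difference of the numbers of copies meeting only $u$ and only $v$, minus the number $N_{uv}$ of copies containing \emph{both} $u$ and $v$. Since $P_4$ has non-adjacent vertex pairs (the two endpoints, or an endpoint and the opposite interior vertex), $N_{uv}$ is generally positive, and the move can strictly decrease the count. I would therefore argue by stability: combining the asymptotic bound with a supersaturation argument, I would show that any $G$ with $\mathcal{N}(P_4,G)\ge(1-o(1))\mathcal{N}(P_4,T_3(n))$ admits a partition $V(G)=V_1\cup V_2\cup V_3$ with each $|V_i|=n/3+o(n)$ and only $o(n^2)$ edges inside parts or missing between parts.

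On such an approximately tripartite graph I would then finish exactly. Using $K_4$-freeness, an edge inside a part is incompatible with a pair of common neighbors across the other two parts, so within-part edges are strictly costly; a careful local exchange—deleting within-part edges and adding all cross edges while tracking both the degree-product term and the triangle term—shows that some complete tripartite $K_{n_1,n_2,n_3}$ on the same vertex set has at least as many copies of $P_4$ as $G$, with equality only if $G$ was already complete tripartite. It then remains to maximize $\mathcal{N}(P_4,K_{n_1,n_2,n_3})$, a symmetric quartic in $n_1,n_2,n_3$ subject to $n_1+n_2+n_3=n$; for $n$ large the leading quartic term dominates, and a smoothing argument (moving one vertex from a larger class to a smaller one strictly increases the count whenever the classes differ by at least two) forces the balanced partition, i.e. $T_3(n)$.

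The main obstacle is the middle step. Because $P_4$ is not a clique, symmetrization is unavailable, so proving that a near-optimal graph is genuinely close to $T_3(n)$, and that the remaining defects can only hurt, is where the real work lies. In particular the exact finish must keep track of the lower-order term $-3\,\mathcal{N}(K_3,G)$, whose maximizer $T_3(n)$ runs opposite to the naive goal of minimizing triangles; one must therefore show that at every local step the gain in the degree-product sum from completing the tripartite structure strictly dominates the simultaneous change in the triangle count.
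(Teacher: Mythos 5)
Your counting identity is correct and your diagnosis of the difficulties is accurate, but what you have written is a research plan, not a proof: both steps that carry the actual content are left unexecuted. First, the stability step (``any $G$ with $\mathcal{N}(P_4,G)\ge(1-o(1))\mathcal{N}(P_4,T_3(n))$ admits an approximately balanced tripartition with $o(n^2)$ defects'') is asserted, not proved; it does not follow formally from the asymptotic value of $ex(n,P_4,K_4)$, and it is exactly the kind of statement that needs its own supersaturation/removal argument. Second, the exact finish --- the local exchange showing that every defect strictly decreases the count while simultaneously tracking the term $-3\mathcal{N}(K_3,G)$, whose sign works against you (by Theorem \ref{Kr is k turan good}, $T_3(n)$ \emph{maximizes} triangles among $K_4$-free graphs, so the subtracted term is largest precisely at the conjectured extremal graph) --- is what you yourself call ``where the real work lies,'' and it is absent. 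Since neither step is carried out, the proposal has a genuine gap: beyond the identity $\mathcal{N}(P_4,G)=\sum_{uv\in E(G)}(d(u)-1)(d(v)-1)-3\mathcal{N}(K_3,G)$ and the $O(n^3)$ bound on the triangle term, nothing is actually established.

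For comparison, the paper's own derivation avoids stability and asymptotics entirely. It obtains the statement as the $k=4$ case of Proposition \ref{K2+Kk-2}: $P_4$ is the graph built from two disjoint copies of $K_2$ by adding one joining edge. That proposition is proved by an exact injection argument, valid for all large $n$ at once: pairs $(H_1,H_2)$ of disjoint cliques in $G$ are classified according to how close $G[V(H_1)\cup V(H_2)]$ comes to containing a $K_{k-1}$; the total number of such pairs is controlled by Proposition \ref{Kl+Km} (disjoint unions of cliques are $k$-Tur\'an-good, proved via the chromatic-number dichotomy plus a balancing argument on complete multipartite graphs), and the number of ``rich'' pairs is controlled by Zykov's theorem on $K_{k-1}$-counts; one then builds an injection $\phi$ from the pairs in $G$ to the pairs in $T_{k-1}(n)$ under which each induced subgraph in $G$ embeds into its image, and summing the counts $f_G(H,\cdot)$ over pairs gives $\mathcal{N}(P_4,G)\le\mathcal{N}(P_4,T_3(n))$ exactly. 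If you want to salvage your route, stability arguments of the type you sketch can in principle be pushed through, but each of your two missing steps would require work comparable in length to the paper's entire argument, whereas the paper's injection scheme settles the question with only clique-counting inputs.
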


In this paper, we give some new $k$-Tur\'an-good graphs. And we also prove that $P_4$ and $P_5$ are $k$-Tur\'an-good for $k\ge4.$

The paper is organized as follows. In Section $2,$ we construct some new kinds of $k$-Tur\'an-good graphs. In Section $3,$ we prove that $P_4$ is $k$-Tur\'an-good. In Section $4,$ we prove that $P_5$ is $k$-Tur\'an-good. The last section contains some concluding remarks.

\section{Some new $k$-Tur\'an-good graphs}
For three graphs $G, G_1, G_2$, define $S_{G_1, G_2}(G)=\{(H_1, H_2):H_1$ and $H_2$ are disjoint subgraphs of $G, H_1$ is a copy of $G_1$, and $H_2$ is a copy of $G_2\}$. We also write $S_{l, k}(G)$ for short if $G_1$ is $K_l$ and $G_2$ is $K_k$.

Choose two vertex-disjoint complete graphs $K_l$ and $K_k$, and join the vertices in $K_l$ to the vertices of $K_k$ by edges arbitrarily. Denote the resulting graph by $H$. Let $G$ be another graph. For every $(H_1, H_2)\in S_{l, k}(G)$, let $f_G(H, (H_1, H_2))$ be the number of copies of $H$ in $G[V(H_1)\cup V(H_2)]$ such that $H_1$ corresponds to the $K_l$ part of $H$ and $H_2$ corresponds to the $K_k$ part of $H$. Then
$$
\mathcal{N}(H, G)=\frac{1}{a}\sum_{(H_1, H_2)\in S_{l, k}(G)}f_G(H, (H_1, H_2)),
$$
where $a$ is the number of times a copy of $H$ is counted, and depends only on $H$.

For example, given a $K_2$ and a $K_1$, if we join one vertex of $K_2$ and the vertex of $K_1$, then we get a path $P_3$. Denote this path by $H$. Let $G$ be a $K_3$, and $V(G)=\{a, b, c\}$. Then $f_G(H, (G[a, b], G[c]))=2$, and
$$
\mathcal{N}(H, G)=\frac{1}{2}(2+2+2)=3.
$$

\begin{proposition}\label{K1+Kl}
Choose two vertex-disjoint complete graphs $K_l$ and $K_1,$ and join the vertices in $K_l$ to the vertex of $K_1$ by edges arbitrarily. Denote the resulting graph by $H$, and assume that $H$ is not a copy of $K_{l+1}.$ Then $H$ is $k$-Tur\'an-good for $k\geq l+1.$
\end{proposition}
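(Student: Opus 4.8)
The plan is to use the counting identity stated just before the proposition to reduce the problem to a clique‑counting optimization, and then to run a Zykov‑type symmetrization followed by an optimization over complete multipartite graphs. Write $H$ as a clique $K_l$ on $\{u_1,\dots,u_l\}$ together with an extra vertex $v$ joined to exactly the $j$ vertices $u_1,\dots,u_j$, where $0\le j\le l-1$ (this is the hypothesis $H\neq K_{l+1}$). Since $H$ has $l+1$ vertices and is not complete, it is $K_{l+1}$‑free, hence $K_k$‑free for $k\ge l+1$; also $\chi(H)=l\le k-1$, so $H$ embeds in $T_{k-1}(n)$ and the question is meaningful. For a pair $(X,w)$ with $X$ an $l$‑clique and $w\notin X$, a copy of $H$ with $K_l$‑part $X$ and $K_1$‑part $w$ is determined by the $j$‑set $S\subseteq N_G(w)\cap X$ playing the role of $\{u_1,\dots,u_j\}$; hence $f_G(H,(X,\{w\}))=\binom{|N_G(w)\cap X|}{j}$. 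Substituting into the identity gives $\mathcal{N}(H,G)=\frac1a\,\Phi(G)$ with
$$\Phi(G)=\sum_{X,\,w\notin X}\binom{|N_G(w)\cap X|}{j},$$
the sum being over $l$‑cliques $X$, and $a>0$ a constant depending only on $H$. So it suffices to show $\Phi$ is maximized over $n$‑vertex $K_k$‑free graphs by $T_{k-1}(n)$.

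Next I would symmetrize. Fix two nonadjacent vertices $u_1,u_2$ of an extremal $G$ and set $N_i=N_G(u_i)\setminus\{u_1,u_2\}$. Form $G_1$ by deleting $u_2$ and inserting a nonadjacent twin of $u_1$, and $G_2$ symmetrically; both are $K_k$‑free. Split $\Phi$ according to which of $u_1,u_2$ a counted triple $(X,w,S)$ uses: $\Phi=A+B_1+B_2+C$, with $C$ the triples using both. Because $u_1\not\sim u_2$, no clique contains both and, since $S\subseteq N(w)$, no $S$ can contain a nonneighbor of $w$; hence in every triple of $C$ one of $u_1,u_2$ is the external vertex $w$ and the other lies in $R:=X\setminus S$. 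Using that the inserted vertex is a twin one finds $\Phi(G_i)=A+2B_i+C_i$, and a short computation gives
$$\Phi(G_1)+\Phi(G_2)-2\Phi(G)=2\big[\gamma(N_1,N_1)+\gamma(N_2,N_2)-\gamma(N_1,N_2)-\gamma(N_2,N_1)\big],$$
where $\gamma(P,Q)$ counts the triples in which the external vertex has neighborhood $P$ and the distinguished $R$‑vertex has neighborhood $Q$, the remaining vertices ranging over $V(G)\setminus\{u_1,u_2\}$.

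The key inequality is then essentially free once one writes
$$\gamma(P,Q)=\sum_{\substack{S\subseteq P\cap Q\\ |S|=j,\ S\ \mathrm{clique}}} g(S,Q),\qquad g(S,Q)=\mathcal{N}\!\big(K_{l-j-1},\,G\big[(Q\cap\textstyle\bigcap_{s\in S}N(s))\setminus\{u_1,u_2\}\big]\big)\ge0,$$
so that $\gamma$ depends on the hub‑neighborhood $P$ only through the range $S\subseteq P\cap Q$, while $g\ge0$. Since $\{S\subseteq N_1\cap N_2\}$ is contained in both $\{S\subseteq N_1\}$ and $\{S\subseteq N_2\}$, we get $\gamma(N_1,N_2)\le\gamma(N_2,N_2)$ and $\gamma(N_2,N_1)\le\gamma(N_1,N_1)$, whence the bracket above is nonnegative and $\max(\Phi(G_1),\Phi(G_2))\ge\Phi(G)$. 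Thus symmetrizing never decreases $\Phi$; taking an extremal graph with the maximum number of edges and iterating forces nonadjacency to be an equivalence relation, so the maximum of $\Phi$ is attained by a complete multipartite graph with at most $k-1$ parts.

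It remains to maximize $\Phi$ over complete multipartite graphs on $n$ vertices with at most $k-1$ parts. There an $l$‑clique picks one vertex from each of $l$ distinct parts, and for an external vertex $w$ one has $|N(w)\cap X|=l$ or $l-1$ according as $w$'s part avoids or meets the parts of $X$; hence $\Phi$ becomes an explicit symmetric function of the part sizes $n_1,\dots,n_{k-1}$, a nonnegative combination of $\binom{l}{j}$‑ and $\binom{l-1}{j}$‑weighted clique‑type terms. I would finish by a smoothing argument: moving a vertex from a larger part to a smaller one does not decrease this function, and using all $k-1$ parts beats using fewer, so for $n$ large the balanced partition $T_{k-1}(n)$ is optimal. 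I expect this final optimization to be the main obstacle: one must verify that the balanced $(k-1)$‑partition — rather than some unbalanced or lower‑part configuration — maximizes the particular symmetric polynomial that arises, and that this holds once $n$ is large enough that its leading term dominates the lower‑order corrections coming from the $\binom{l-1}{j}$ contributions.
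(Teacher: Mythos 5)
Your reduction of $\mathcal{N}(H,G)$ to the weight $\Phi(G)=\sum_{X,w}\binom{|N_G(w)\cap X|}{j}$, and your symmetrization inequality
$\Phi(G_1)+\Phi(G_2)-2\Phi(G)=2\bigl[\gamma(N_1,N_1)+\gamma(N_2,N_2)-\gamma(N_1,N_2)-\gamma(N_2,N_1)\bigr]\ge 0$,
are correct as far as they go, and this is a genuinely different route from the paper's (which uses no symmetrization at all). But the proof has a real gap exactly where you flag it, and the heuristic you offer for closing it is false. On a complete multipartite graph with parts $n_1,\dots,n_{k-1}$ one gets
\begin{equation*}
\Phi=\binom{l}{j}(l+1)\,\mathcal{N}(K_{l+1},G)+\binom{l-1}{j}\sum_{|I|=l}\Bigl(\prod_{i\in I}n_i\Bigr)\sum_{i\in I}(n_i-1),
\end{equation*}
and both summands are of order $n^{l+1}$: the $\binom{l-1}{j}$-term is \emph{not} a lower-order correction. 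Worse, that term alone is maximized by unbalanced partitions using fewer parts: for $l=2$, $k-1=3$, $n=6$ it equals $24$ at $(2,2,2)$ but $36$ at $(3,3,0)$, and this preference persists asymptotically ($(n/2,n/2,0)$ gives $\sim n^3/4$ versus $\sim 2n^3/9$ for the balanced partition). So balancedness of the optimum holds only for the full combination, and the claim that moving a vertex from a larger part to a smaller one never decreases $\Phi$ must be checked by an explicit local computation in which the gain in the $\mathcal{N}(K_{l+1})$-term compensates a possible loss in the second term --- essentially the calculation carried out in Case 2 of the proof of Proposition \ref{Kl+Km}. Until that is done, your last step is unproven.

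A smaller but genuine issue is the termination of the symmetrization. From $\Phi(G_1)+\Phi(G_2)\ge 2\Phi(G)$ and extremality you get $\Phi(G_1)=\Phi(G_2)=\Phi(G)$, so both replacements preserve extremality; but your tie-breaker (edge count) stalls whenever $\deg u_1=\deg u_2$, so ``iterating forces nonadjacency to be an equivalence relation'' needs a more careful potential, or else the chromatic-number dichotomy used in Proposition \ref{Kl+Km} ($\chi(G)\le k-1$, where one simply completes $G$ to a complete multipartite graph, versus $\chi(G)\ge k$, killed by the Erd\H{o}s--Simonovits low-degree vertex). For comparison, the paper's proof of this proposition bypasses your entire program: by Zykov's Theorem \ref{Kr is k turan good} it has $|S_{l,1}(G)|\le |S_{l,1}(T_{k-1}(n))|$ and $\mathcal{N}(K_{l+1},G)\le\mathcal{N}(K_{l+1},T_{k-1}(n))$, and then builds an injection from pairs of $G$ to pairs of $T_{k-1}(n)$ under which induced subgraphs can only grow --- the key observation being that in $T_{k-1}(n)$ every pair $(K_l,K_1)$ not spanning a $K_{l+1}$ spans $K_{l+1}$ minus exactly one edge, the largest non-complete possibility --- and sums $f$ over this injection. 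That argument needs no structure theorem for the extremal graph and no optimization over part sizes.
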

\begin{proof}
Let $G$ be a $K_k$-free graph with $n$ vertices having maximum number of copies of $H.$ We write $S=S_{l, 1}(G)$ and $T=S_{l, 1}(T_{k-1}(n))$ for simplicity. Let $s=|S|$ and $t=|T|.$ Then $s=\mathcal{N}(K_l, G)(n-l)$ and $t=\mathcal{N}(K_l, T_{k-1}(n))(n-l).$ Since $G$ is $K_k$-free, $\mathcal{N}(K_l, G)\leq\mathcal{N}(K_l, T_{k-1}(n)).$ And hence $s\leq t.$

Let $S=S_1\cup S_2$, where $S_1=\{(H_1, H_2):H_1$ and $H_2$ are disjoint subgraphs of $G, H_1$ is a copy of $K_l$, $H_2$ is a copy of $K_1$, and $G[V(H_1)\cup V(H_2)]$ is a copy of $K_{l+1}\}$ and $S_2=\{(H_1, H_2):H_1$ and $H_2$ are disjoint subgraphs of $G, H_1$ is a copy of $K_l$, $H_2$ is a copy of $K_1$, and $G[V(H_1)\cup V(H_2)]$ is not a copy of $K_{l+1}\}.$ Let $s_1=|S_1|$ and $s_2=|S_2|.$ Then $s=s_1+s_2.$ Define $T_1$, $T_2$, $t_1$, and $t_2$ similarly.

Note that $s_1=\mathcal{N}(K_{l+1}, G)(l+1)$, and $t_1=\mathcal{N}(K_{l+1}, T_{k-1}(n))(l+1).$ So $t_1\geq s_1.$ Thus there is an injection $\phi_1:S_1\rightarrow T_1$ such that if $\phi_1(H_1, H_2)=(H_1', H_2')$, then $G[V(H_1)\cup V(H_2)]$ is isomorphic to a subgraph of $T_{k-1}(n)[V(H_1')\cup V(H_2')]$ (indeed, both of them are $K_{l+1}$).

Now consider $S_2$ and $T_2.$ For every $(H_1, H_2)\in S_2$, since $H_1$ is a copy of $K_{l}$ but $G[V(H_1)\cup V(H_2)]$ is not a copy of $K_{l+1}$, it implies that in $G,$ the vertex in $H_2$ is not adjacent to at least one vertex in $H_1.$ On the other hand, in $T_{k-1}(n),$ for every $(H_1', H_2')\in T_2$, the vertex in $H_2'$ is not adjacent to exactly one vertex in $H_1'.$ Note that
$$
|S_2|=s_2=s-s_1\leq t-s_1=(t_1-s_1)+t_2=|(T_1\backslash\phi_1(S_1))\cup T_2|.
$$
So there is an injection $\phi_2:S_2\rightarrow (T_1\backslash\phi_1(S_1))\cup T_2$ such that if $\phi_2(H_1, H_2)=(H_1', H_2')$, then $G[V(H_1)\cup V(H_2)]$ is isomorphic to a subgraph of $T_{k-1}(n)[V(H_1')\cup V(H_2')].$

Combining $\phi_1$ and $\phi_2$, we find an injection $\phi$ from $S$ to $T$, such that if $\phi(H_1, H_2)=(H_1', H_2')$, then $G[V(H_1)\cup V(H_2)]$ is isomorphic to a subgraph of $T_{k-1}(n)[V(H_1')\cup V(H_2')].$ Therefore,
\begin{equation*}
  \begin{split}
  \mathcal{N}(H, G)&=\frac{1}{a}\sum_{(H_1, H_2)\in S}f_G(H, (H_1, H_2))\\
                   &\leq\frac{1}{a}\sum_{\phi(H_1, H_2)\in \phi(S)}f_{T_{k-1}(n)}(H, \phi(H_1, H_2))\\
                   &\leq\frac{1}{a}\sum_{(H_1', H_2')\in T}f_{T_{k-1}(n)}(H, (H_1', H_2'))\\
                   &=\mathcal{N}(H, T_{k-1}(n)).
   \end{split}
\end{equation*}
This completes the proof.
\end{proof}

By the above proposition, we can get the following corollary, which was proved in \cite{2020arXiv200603756G}.
\begin{corollary}
  $P_3$ is $k$-Tur\'an-good for $k\ge3.$
\end{corollary}

In order to prove that $P_5$ is $k$-Tur\'an-good later, we need the following proposition.
\begin{proposition}\label{Kl+Km}
  For any positive integers $l, m, k$ with $2\leq l\leq m<k$, let $H$ be the graph such that $V(H)=V(K_l)\cup V(K_m)$ and $E(H)=E(K_l)\cup E(K_m)$. Then $H$ is $k$-Tur\'an-good.
\end{proposition}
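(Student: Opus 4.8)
The plan is to run the counting framework introduced at the start of this section with $S=S_{l,m}(G)$ and $T=S_{l,m}(T_{k-1}(n))$, where $G$ is a $K_k$-free $n$-vertex graph maximizing $\mathcal N(H,G)$. The first point is that, since $H$ has no edge joining its $K_l$ to its $K_m$, every $(H_1,H_2)\in S_{l,m}(G)$ has $f_G(H,(H_1,H_2))=1$: the induced subgraph $G[V(H_1)\cup V(H_2)]$ has exactly $l+m$ vertices, and the unique copy of $H$ in it in which $H_1$ plays the role of $K_l$ and $H_2$ that of $K_m$ is the one with edge set $E(H_1)\cup E(H_2)$. The identical statement holds in $T_{k-1}(n)$. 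Hence $\mathcal N(H,G)=\frac1a|S|$ and $\mathcal N(H,T_{k-1}(n))=\frac1a|T|$ with the same constant $a$ (namely $a=1$ if $l\neq m$ and $a=2$ if $l=m$), so the proposition reduces to the single cardinality inequality $|S_{l,m}(G)|\le|S_{l,m}(T_{k-1}(n))|$; as $T_{k-1}(n)$ is itself $K_k$-free, the reverse inequality is automatic, so this is exactly what must be shown. Moreover the embedding hypothesis of the framework is vacuous here, so \emph{any} injection $\phi\colon S\to T$ suffices.

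To produce $\phi$ I would record, for each pair $(A,B)\in S_{l,m}(G)$ with $A$ the $K_l$ and $B$ the $K_m$, its \emph{defect} $D=D(A,B)$, the bipartite graph on $A\cup B$ whose edges are the missing cross-edges of $G$; thus $G[A\cup B]$ is $K_{l+m}$ with the pairs of $D$ removed. The corresponding pairs in $T_{k-1}(n)$ are rigid: a clique meets each part of $T_{k-1}(n)$ at most once, so the defect of every Tur\'an pair is a (partial) matching, of size equal to the number of parts shared by $A$ and $B$. The structural input I would use is a K\"onig-type bound: a clique of $G[A\cup B]$ is precisely an independent set of $D$, and since $G$ is $K_k$-free this clique has at most $k-1$ vertices, whence $\alpha(D)=l+m-\nu(D)\le k-1$, so $\nu(D)\ge l+m-k+1$. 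Consequently $D$ contains a matching $M$ of size $\max\{0,\,l+m-k+1\}$, and by placing a maximum independent set of $D$ into distinct parts and each edge of $M$ into a common part one obtains a genuine Tur\'an pair $(A',B')$ whose defect is exactly $M\subseteq D$. Since $M\subseteq D$, the graph $G[A\cup B]$ embeds into $T_{k-1}(n)[A'\cup B']$, which is the domination the framework asks for.

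The real work, and the step where the argument must go beyond the proof of Proposition \ref{K1+Kl}, is turning this per-pair recipe into a global injection. For $m=1$ one has the factorisation $|S_{l,1}(G)|=\mathcal N(K_l,G)(n-l)$, so $|S|\le|T|$ is immediate from Theorem \ref{Kr is k turan good} and only the two defect levels $D=\varnothing$ and $|D|\ge1$ arise. For $m\ge2$ the count $|S_{l,m}(G)|$ no longer factors through one clique number and $D$ may be an arbitrary bipartite graph, so I would filter $S$ and $T$ by the isomorphism type of $D$, order the types by edge-containment of the induced subgraphs, and build $\phi$ from the densest type (the pairs with $D=\varnothing$, i.e.\ the copies of $K_{l+m}$, absorbed using $\mathcal N(K_{l+m},G)\le\mathcal N(K_{l+m},T_{k-1}(n))$) down to the sparsest, at each stage sending the remaining source pairs into still-unused Tur\'an pairs whose defect is contained in their own, which the recipe above guarantees exist. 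Showing that enough Tur\'an pairs survive at every level — equivalently, that the partial injections never get stuck — is the main obstacle; I expect to settle it by expressing the level counts through the clique numbers $\mathcal N(K_r,\cdot)$ via inclusion--exclusion and then invoking Theorem \ref{Kr is k turan good} for all $r\le k-1$ simultaneously, so that $T_{k-1}(n)$ dominates $G$ at each level at once. Once this bookkeeping is in place, the embedding property from the previous paragraph makes the framework inequality immediate and yields $\mathcal N(H,G)\le\mathcal N(H,T_{k-1}(n))$.
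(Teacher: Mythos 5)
Your reduction is correct but carries no content: since $H$ has no cross-edges, $f\equiv 1$ on every pair and the constant $a$ is the same for $G$ and $T_{k-1}(n)$, so ``$|S_{l,m}(G)|\le |S_{l,m}(T_{k-1}(n))|$'' is not a stepping stone towards the proposition --- it \emph{is} the proposition. Everything before your last paragraph (the K\"onig bound $\nu(D)\ge l+m-k+1$, the realization of any matching-defect as a genuine Tur\'an pair) is correct but, as you yourself observe, irrelevant once the embedding hypothesis is vacuous. The proof therefore lives entirely in the last paragraph, and there you do not give an argument: ``I expect to settle it by expressing the level counts through the clique numbers $\mathcal{N}(K_r,\cdot)$ via inclusion--exclusion and then invoking Theorem \ref{Kr is k turan good}'' is a plan, and it is a plan that cannot work in the form stated. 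The number of disjoint clique pairs is not a function of the clique counts: for $l=m=2$, the graphs $K_{1,3}$ and $P_4$ (each padded with isolated vertices) have identical values of $\mathcal{N}(K_r,\cdot)$ for all $r$, yet contain $0$ and $1$ copies of $M_2$ respectively. Any inclusion--exclusion for $|S_{l,m}(G)|$ necessarily involves counts of \emph{intersecting} or partially overlapping clique pairs (for $M_2$, the cherry count $\sum_v\binom{d(v)}{2}$), and these enter with a negative sign, so domination of all clique counts by $T_{k-1}(n)$ gives no control over them --- if anything it pushes the wrong way. This is precisely why the injection argument of Proposition \ref{K1+Kl} works for $m=1$ (there $|S_{l,1}(G)|=\mathcal{N}(K_l,G)(n-l)$ factors through a single clique count) and why it does not extend to $m\ge 2$; you identify this as ``the main obstacle'' but do not overcome it.

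The paper takes a genuinely different route, modeled on Gerbner's proof that $M_l$ is Tur\'an-good, and its two ingredients are exactly what your sketch is missing: a mechanism that exploits more than clique counts. If $\chi(G)\ge k$, the Erd\H{o}s--Simonovits theorem produces a vertex of degree at most $(1-\frac{1}{k-7/3})n$, which forces $\mathcal{N}(K_l,G)\le\mathcal{N}(K_l,T_{k-1}(n))-\Omega(n^{l-1})$; hence $\mathcal{N}(H,G)$ falls $\Omega(n^{l+m-1})$ short of the product bound $\mathcal{N}(K_l,T_{k-1}(n))\mathcal{N}(K_m,T_{k-1}(n-l))$, while $\mathcal{N}(H,T_{k-1}(n))$ is within $O(n^{l+m-2})$ of that same bound, a contradiction for large $n$ (inequalities (\ref{xie1}) and (\ref{xie2}) in the paper). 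If $\chi(G)\le k-1$, one may assume $G$ is complete $(k-1)$-partite and then symmetrize: moving a vertex from a larger part to a smaller part does not decrease the number of copies of $H$, shown by comparing the ``type I/II'' copies lost with the ``type III/IV'' copies gained. Neither step is recoverable from your framework, so the proposal has a genuine gap at its central claim.
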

The proof is similar to that for proving $M_l$ is $k$-Tur\'an-good in \cite{2020arXiv200616150G}. For the sake of completeness, we include it here.
\begin{proof}
  Let $G$ be an $n$-vertex $K_k$-free graph with the largest number of copies of $H$, and let $n$ be sufficiently large with respect to $k$.

  $\mathbf{Case~ 1.}$ $G$ has chromatic number at least $k.$ We claim that
  \begin{equation}\label{xie1}
    \mathcal{N}(K_l, T_{k-1}(n))\mathcal{N}(K_m, T_{k-1}(n-l))-\mathcal{N}(H, G)=\Omega(n^{l+m-1}),
  \end{equation}
  and
  \begin{equation}\label{xie2}
    \mathcal{N}(K_l, T_{k-1}(n))\mathcal{N}(K_m, T_{k-1}(n-l))-\mathcal{N}(H, T_{k-1}(n))=O(n^{l+m-2}).
  \end{equation}
  This shows that $\mathcal{N}(H, G)<\mathcal{N}(H, T_{k-1}(n))$ for large $n$, a contradiction.

  To prove (\ref{xie1}), we need a theorem of Erd\H{o}s and Simonovits \cite{MR342429}, which states that for every $n$-vertex $K_k$-free graph with chromatic number at least $k$, there is a vertex $v$ of degree at most $(1-\frac{1}{k-7/3})n$. Then $\mathcal{N}(K_l, G)$ is equal to the sum of the number of $K_l$ in $G$ that contain $v$ as a vertex and the number of $K_l$ in $G$ that do not contain $v$ as a vertex, where the second term is at most $\mathcal{N}(K_l, T_{k-1}(n-1))$. As for the number of $K_l$ in $G$ that contain $v$ as a vertex, let $G_1$ be the subgraph of $G$ induced by the neighbors of $v$. Then $G_1$ is $K_{k-1}$-free, and by Theorem \ref{Kr is k turan good}, $\mathcal{N}(K_{l-1}, G_1)\leq{k-2\choose l-1}(\lceil\frac{(1-\frac{1}{k-7/3})n}{k-2}\rceil)^{l-1}$. So
  $$
  \mathcal{N}(K_l, G)\leq{k-2\choose l-1}\left(\lceil\frac{(1-\frac{1}{k-7/3})n}{k-2}\rceil\right)^{l-1}+\mathcal{N}(K_l, T_{k-1}(n-1)).
  $$
  On the other hand, we can delete one vertex from $T_{k-1}(n)$ to obtain $T_{k-1}(n-1)$, and
  $$
  \mathcal{N}(K_l, T_{k-1}(n))\geq\binom{k-2}{l-1}\left(\lfloor\frac{n}{k-1}\rfloor\right)^{l-1}+\mathcal{N}(K_l, T_{k-1}(n-1)).
  $$
  Since $\frac{(1-\frac{1}{k-7/3})}{k-2}<\frac{1}{k-1}$, it implies that $\mathcal{N}(K_l, T_{k-1}(n))-\mathcal{N}(K_l, G)\geq\Theta(n^{l-1})$, and we write $\mathcal{N}(K_l, T_{k-1}(n))-\mathcal{N}(K_l, G)=\Omega(n^{l-1})$.

  Let $G_2$ be any resulting graph after deleting a copy of $K_l$ from $G$. Then $G_2$ is an $(n-l)$-vertex $K_k$-free graph. So $\mathcal{N}(K_m, G_2)\leq\mathcal{N}(K_m, T_{k-1}(n-l))$. Therefore
  \begin{equation*}
  \begin{split}
    &\mathcal{N}(K_l, T_{k-1}(n))\mathcal{N}(K_m, T_{k-1}(n-l))-\mathcal{N}(H, G)\\
     \geq&\mathcal{N}(K_l, T_{k-1}(n))\mathcal{N}(K_m, T_{k-1}(n-l))-\mathcal{N}(K_l, G)\mathcal{N}(K_m, T_{k-1}(n-l)) \\
      =&\Omega(n^{l+m-1}).
  \end{split}
  \end{equation*}
  This proves (\ref{xie1}).

  Now let us prove (\ref{xie2}). We need to count the number of copies of $H$ in $T_{k-1}(n).$ We first pick a copy of $K_l$ in $T_{k-1}(n)$, and there are $\mathcal{N}(K_l, T_{k-1}(n))=\Theta(n^l)$ ways. After deleting the vertices of the picked $K_l$, the remaining graph $G_3$ is a complete $(k-1)$-partite graph on $(n-l)$ vertices. If $(k-1)\mid n$, then the size of every part is either $n/(k-1)$ or $n/(k-1)-1$; if $(k-1)\nmid n$, then the size of every part is one of $\{\lceil n/k\rceil, \lceil n/k\rceil-1, \lfloor n/k\rfloor-1\}.$ In the first case, $G_3$ is $T_{k-1}(n-l)$. In the second case, we can obtain $G_3$ from the $T_{k-1}(n-l)$ by moving a constant $c$ (indeed, $c\leq l$) number of vertices from some parts to other parts. In a word, we can always obtain $G_3$ from the $T_{k-1}(n-l)$ by moving a constant $c$ number of vertices from some parts to other parts. And it is easy to verify that each such move decreases the number of copies of $K_m$ by $O(n^{m-2}).$ Thus $\mathcal{N}(K_m,G_3)=\mathcal{N}(K_m,T_{k-1}(n))-O(n^{m-2}).$ Hence
  $$
  \mathcal{N}(H,T_{k-1}(n))=\mathcal{N}(K_l, T_{k-1}(n))(\mathcal{N}(K_m,T_{k-1}(n-l))-O(n^{m-2})).
  $$
  Therefore, we have that
  $$
  \mathcal{N}(K_l, T_{k-1}(n))\mathcal{N}(K_m, T_{k-1}(n-l))-\mathcal{N}(H, T_{k-1}(n))=O(n^{l+m-2}).
  $$
  This proves (\ref{xie2}).

  $\mathbf{Case~ 2.}$ If $\chi(G)\le k-1,$ then we can assume that $G$ is a complete $(k-1)$-partite graph, as adding edges will neither decrease the number of copies of $H$ nor make a $K_k$. Suppose $V(G)=V_1\cup V_2\cup\cdots\cup V_{k-1}$. We now show that making the graph more balanced does not decrease the number of copies of $H$. In other words, in order to maximize $\mathcal{N}(H, G)$, $||V_i|-|V_j||$ should be $0$ or $1$, for every $i, j$. If so, the Tur\'an graph $T_{k-1}(n)$ will have the maximum number of copies of $H$, and it will complete the proof.

  To derive a contradiction, and without loss of generality, assume that $V_1=\{u_1, u_2, \ldots, u_a\}$ has size $a$ and $V_2=\{v, w_1, w_2, \ldots, w_b\}$ has size $b+1$, such that $b-a\ge1.$ Let $G'$ be another complete $(k-1)$-partite graph with parts $V_1\cup\{v\}, V_2\setminus\{v\}, V_3, V_4, \cdots, V_{k-1}$. So $G'$ can be obtained from $G$ by moving $v$ from $V_2$ to $V_1$. It suffices to show that $\mathcal{N}(H, G)\leq\mathcal{N}(H, G')$.

  We say a copy of $H$ in $G$ is of $type~ \mathrm{I}$ if for some $i$, both $v$ and $u_i$ are in its $K_l$ part, and a copy of $H$ in $G$ is of $type~ \mathrm{II}$ if for some $i$, both $v$ and $u_i$ are in its $K_m$ part. Similarly, we say a copy of $H$ in $G'$ is of $type~ \mathrm{III}$ if for some $i$, both $v$ and $w_i$ are in its $K_l$ part, and a copy of $H$ in $G'$ is of $type~ \mathrm{IV}$ if for some $i$, both $v$ and $w_i$ are in its $K_m$ part. So by moving $v$ from $V_2$ part to $V_1$ part, we lose $H$ of $type~ \mathrm{I}$ and $type~ \mathrm{II}$, and at the same time, we gain $H$ of $type~ \mathrm{III}$ and $type~ \mathrm{IV}$. We claim that the number of $type~ \mathrm{I}$ $H$s is less than or equal to the number of $type~ \mathrm{III}$ $H$s. If the claim is true, then similarly, the number of $type~ \mathrm{II}$ $H$s is less than or equal to the number of $type~ \mathrm{IV}$ $H$s. And this means that $\mathcal{N}(H, G)\leq\mathcal{N}(H, G')$.

  Let $A_\lambda$ $(1\leq\lambda\leq a)$ be the number of copies of $H$ of $type~ \mathrm{I}$ such that $v$ and $u_\lambda$ are in the $K_l$ part. Let $M=\mathcal{N}(K_{l-2}, G[V_3\cup V_4\cup\cdots\cup V_{k-1}])$ be the number of copies of $K_{l-2}$ in $G[V_3\cup V_4\cup\cdots\cup V_{k-1}]$, and let $K^{(i)}_{l-2}(1\leq i\leq M)$ be the copies of $K_{l-2}$ in $G[V_3\cup V_4\cup\cdots\cup V_{k-1}]$. Then
  $$
  A_\lambda=\sum_{i=1}^M\mathcal{N}\left(K_m, G[(V_1\cup V_2\cup\cdots\cup V_{k-1})\setminus(\{v, u_\lambda\}\cup V(K^{(i)}_{l-2}))]\right),
  $$
  and the number of $type~ \mathrm{I}$ $H$s in $G$ is $\sum_{\lambda=1}^aA_\lambda$.

  Let $B_\mu$ $(1\le\mu\le b)$ be the number of copies of $H$ of $type~ \mathrm{III}$ such that $u$ and $w_\mu$ are in the $K_l$ part. Recall that $G[V_3\cup V_4\cup\cdots\cup V_{k-1}]=G'[V_3\cup V_4\cup\cdots\cup V_{k-1}]$. Then
  $$
  B_\mu=\sum_{i=1}^M\mathcal{N}\left(K_m, G'[(V_1\cup V_2\cup\cdots\cup V_{k-1})\setminus(\{v, w_\mu\}\cup V(K^{(i)}_{l-2}))]\right),
  $$
  and the number of $type~ \mathrm{III}$ $H$s in $G'$ is $\sum_{\mu=1}^bB_\mu$.

  Fix $\lambda, \mu$ and $i$. Consider $\mathcal{N}\left(K_m, G[(V_1\cup V_2\cup\cdots\cup V_{k-1})\setminus(\{v, u_\lambda\}\cup V(K^{(i)}_{l-2}))]\right)$ and \\$\mathcal{N}\left(K_m, G'[(V_1\cup V_2\cup\cdots\cup V_{k-1})\setminus(\{v, w_\mu\}\cup V(K^{(i)}_{l-2}))]\right)$. Note that $G'[(V_1\cup V_2\cup\cdots\cup V_{k-1})\setminus(\{v, w_\mu\}\cup V(K^{(i)}_{l-2}))]$ is more balanced than $G[(V_1\cup V_2\cup\cdots\cup V_{k-1})\setminus(\{v, u_\lambda\}\cup V(K^{(i)}_{l-2}))]$. So
  \begin{equation*}
    \begin{split}
        & \mathcal{N}\left(K_m, G[(V_1\cup V_2\cup\cdots\cup V_{k-1})\setminus(\{v, u_\lambda\}\cup V(K^{(i)}_{l-2}))]\right) \\
        \leq &\mathcal{N}\left(K_m, G'[(V_1\cup V_2\cup\cdots\cup V_{k-1})\setminus(\{v, w_\mu\}\cup V(K^{(i)}_{l-2}))]\right).
     \end{split}
  \end{equation*}
  To be precise, one can calculate that
  \begin{equation*}
    \begin{split}
       & \mathcal{N}\left(K_m, G'[(V_1\cup V_2\cup\cdots\cup V_{k-1})\setminus(\{v, w_\mu\}\cup V(K^{(i)}_{l-2}))]\right)\\
       =& \mathcal{N}\left(K_m, G'[(V_3\cup V_4\cup\cdots\cup V_{k-1})\setminus V(K^{(i)}_{l-2})]\right)\\
        &+(a+b-1)\mathcal{N}\left(K_{m-1}, G'[(V_3\cup V_4\cup\cdots\cup V_{k-1})\setminus V(K^{(i)}_{l-2})]\right)\\
       &+a(b-1)\mathcal{N}\left(K_{m-2}, G'[(V_3\cup V_4\cup\cdots\cup V_{k-1})\setminus V(K^{(i)}_{l-2})]\right),
    \end{split}
  \end{equation*}
  and
  \begin{equation*}
    \begin{split}
       & \mathcal{N}\left(K_m, G[(V_1\cup V_2\cup\cdots\cup V_{k-1})\setminus(\{v, w_\mu\}\cup V(K^{(i)}_{l-2}))]\right)\\
       =& \mathcal{N}\left(K_m, G[(V_3\cup V_4\cup\cdots\cup V_{k-1})\setminus V(K^{(i)}_{l-2})]\right)\\
        &+(a+b-1)\mathcal{N}\left(K_{m-1}, G[(V_3\cup V_4\cup\cdots\cup V_{k-1})\setminus V(K^{(i)}_{l-2})]\right)\\
       &+(a-1)b\mathcal{N}\left(K_{m-2}, G[(V_3\cup V_4\cup\cdots\cup V_{k-1})\setminus V(K^{(i)}_{l-2})]\right).
    \end{split}
  \end{equation*}
  Since $b\geq a+1$, it implies that
    \begin{equation*}
    \begin{split}
        & \mathcal{N}\left(K_m, G[(V_1\cup V_2\cup\cdots\cup V_{k-1})\setminus(\{v, u_\lambda\}\cup V(K^{(i)}_{l-2}))]\right) \\
        \leq &\mathcal{N}\left(K_m, G'[(V_1\cup V_2\cup\cdots\cup V_{k-1})\setminus(\{v, w_\mu\}\cup V(K^{(i)}_{l-2}))]\right).
     \end{split}
  \end{equation*}

  The inequality holds for every $\lambda, \mu$ and $i$. Hence
  \begin{equation*}
    \begin{split}
       \sum_{\lambda=1}^aA_\lambda & =\sum_{\lambda=1}^a\sum_{i=1}^M\mathcal{N}(K_m, G[V_1\cup V_2\cup\cdots\cup V_{k-1}\setminus(\{v, u_\lambda\}\cup V(K^{(i)}_{l-2}))]) \\
         & \leq\sum_{\mu=1}^b\sum_{i=1}^M\mathcal{N}(K_m, G'[V_1\cup V_2\cup\cdots\cup V_{k-1}\setminus(\{v, w_\mu\}\cup V(K^{(i)}_{l-2}))])\\
         &=\sum_{\mu=1}^bB_\mu.
    \end{split}
  \end{equation*}
  Therefore, the number of $type~ \mathrm{I}$ $H$s in $G$ is less than or equal to the number of $type~ \mathrm{III}$ $H$s in $G'$, as claimed.
\end{proof}
\begin{remark}
Combining this proposition with Proposition \ref{K1+Kl}, it implies that the conclusion holds for every $l, m$ and $k$ such that $1\leq l\leq m<k$.

This proposition can be extended to a finite number of cliques, i.e. for $1 \leq l_1 \leq l_2 \leq\cdots\leq l_m <k$, if a graph H satisfies that $V(H)=V(K_{l_1})\cup V(K_{l_2})\cup\cdots\cup V(K_{l_m})$ and $E(H)=E(K_{l_1})\cup E(K_{l_2})\cup\cdots\cup E(K_{l_m})$, then $H$ is $k$-Tur\'an-good.
\end{remark}

\begin{corollary}\label{K3+K2 is k turan good}
\FlagGraph{5}{1--2,3--4,4--5,3--5} is $k$-Tur\'an-good for $k\geq4$.
\end{corollary}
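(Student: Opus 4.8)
The plan is to recognize the pictured graph as one already covered by Proposition~\ref{Kl+Km} and then specialize. The regular pentagon in the figure has vertices $1,2,3,4,5$, and the drawn edges are $12$ together with $34$, $45$, $35$. The single edge $12$ forms a $K_2$ on $\{1,2\}$, while the three edges on $\{3,4,5\}$ form a $K_3$; crucially, no edge joins the two groups. Hence the displayed graph is precisely the vertex-disjoint union of $K_2$ and $K_3$, that is, the graph $H$ with $V(H)=V(K_2)\cup V(K_3)$ and $E(H)=E(K_2)\cup E(K_3)$.

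This is exactly the family of graphs treated in Proposition~\ref{Kl+Km}, specialized to $l=2$ and $m=3$. The hypothesis of that proposition reads $2\le l\le m<k$; substituting $l=2,\ m=3$ gives $2\le 2\le 3<k$, which holds if and only if $k\ge 4$. Therefore, for every $k\ge 4$, Proposition~\ref{Kl+Km} applies directly and yields that $K_2\cup K_3$ is $k$-Tur\'an-good, which is the content of the corollary.

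There is no genuine obstacle to overcome here: all the substance of the argument already resides in Proposition~\ref{Kl+Km} (the case analysis on $\chi(G)$ and the balancing/shifting comparison of parts), and the corollary is a clean specialization. The only point that warrants care is the correct reading of the \texttt{FlagGraph} picture, namely confirming that the four listed edges decompose into a disjoint $K_2$ and $K_3$ with no cross edges and that the parameter range $k\ge 4$ matches the inequality $m<k$ with $m=3$. Once this identification is made, the proof is immediate.
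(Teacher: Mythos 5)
Your proposal is correct and matches the paper's intended argument exactly: the corollary is stated as an immediate consequence of Proposition~\ref{Kl+Km}, obtained by reading the pictured graph as the disjoint union $K_2\cup K_3$ and specializing to $l=2$, $m=3$, where the hypothesis $2\le l\le m<k$ becomes $k\ge4$. No further commentary is needed.
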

Now, we can prove the following proposition.
\begin{proposition}\label{K2+Kk-2}
Choose two vertex-disjoint complete graphs $K_{k-2}$ and $K_2$, and join the vertices in $K_{k-2}$ to the vertices of $K_2$ by edges arbitrarily. Denote the resulting graph by $H$, and assume that $H$ is not a copy of $K_k.$ Then $H$ is $k$-Tur\'an-good.
\end{proposition}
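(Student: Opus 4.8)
The plan is to mimic the two-case analysis used for Proposition \ref{Kl+Km}, splitting on the chromatic number of an extremal graph, after first clearing away the configurations already covered by earlier results. Write $V(H)=A\cup\{x,y\}$, where $A$ spans the $K_{k-2}$, $xy$ is the $K_2$, and $x$ (respectively $y$) is nonadjacent to $p\ge 0$ (respectively $q\ge 0$) vertices of $A$; since $H\neq K_k$ we have $p+q\ge 1$. If $p=q=k-2$ then no edge joins $\{x,y\}$ to $A$ and $H=K_{k-2}\sqcup K_2$ is $k$-Tur\'an-good by Proposition \ref{Kl+Km}. If instead $H$ contains a $K_{k-1}$ (in particular whenever $p=0$ or $q=0$), then the remaining vertex is joined to a subset, hence a sub-clique, of that $K_{k-1}$, so $H$ is $k$-Tur\'an-good by Theorem \ref{thm of H+K-1_is k turan good} applied with $H_0=K_1$. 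It remains to treat the genuinely new configurations where $H\not\supseteq K_{k-1}$ and at least one edge runs between $\{x,y\}$ and $A$; the argument below in fact handles every $H$. Let $G$ be an $n$-vertex $K_k$-free graph maximising $\mathcal{N}(H,G)$ for large $n$.

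Suppose first that $\chi(G)\ge k$. As in Proposition \ref{Kl+Km}, the Erd\H{o}s--Simonovits theorem produces a vertex of degree at most $(1-\tfrac1{k-7/3})n$, whence $\mathcal{N}(K_{k-2},T_{k-1}(n))-\mathcal{N}(K_{k-2},G)=\Omega(n^{k-3})$. Writing each copy of $H$ as a $K_{k-2}$ (the image of $A$) together with a disjoint edge attached in the pattern of $H$, I would bound, for each copy $Q$ of $K_{k-2}$, its number $e(Q)$ of \emph{admissible} extensions to $H$, namely the edges $uw$ disjoint from $Q$ with $u$ adjacent to the images of the neighbours of $x$ in $A$ and $w$ to the images of the neighbours of $y$ in $A$. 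Here one cannot drop the adjacency constraints as Proposition \ref{Kl+Km} does with the uniform factor $\mathcal{N}(K_m,T_{k-1}(n-l))$, since that overcounts $\mathcal{N}(H,T_{k-1}(n))$ by a constant factor and breaks the analogue of (\ref{xie2}). Instead I would use two facts: a local Tur\'an-type bound $e(Q)\le e_0+O(1)$ for every such $Q$ in a $K_k$-free graph, where $e_0=\Theta(n^2)$ is the admissible-extension count of a clique in the balanced $T_{k-1}(n)$; and the observation that, the parts of $T_{k-1}(n)$ differing by at most one, the same cancellation used for (\ref{xie2}) gives each $e(Q)$ equal to $e_0$ up to $O(1)$, so $\mathcal{N}(H,T_{k-1}(n))=\tfrac1a\mathcal{N}(K_{k-2},T_{k-1}(n))\,e_0+O(n^{k-2})$. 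Combining, $\mathcal{N}(H,G)\le\tfrac1a\mathcal{N}(K_{k-2},G)(e_0+O(1))\le\mathcal{N}(H,T_{k-1}(n))+O(n^{k-2})-\Omega(n^{k-1})<\mathcal{N}(H,T_{k-1}(n))$, a contradiction.

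Now suppose $\chi(G)\le k-1$; adding edges, I may assume $G$ is complete $(k-1)$-partite with parts $V_1,\dots,V_{k-1}$, and I would show that rebalancing never decreases $\mathcal{N}(H,\cdot)$. If $|V_1|=a$ and $|V_2|=b+1$ with $b\ge a+1$, move one vertex $v$ from $V_2$ to $V_1$ to form $G'$ and prove $\mathcal{N}(H,G)\le\mathcal{N}(H,G')$; iterating forces the optimum to be $T_{k-1}(n)$. Copies of $H$ missing $v$ are common to $G$ and $G'$, so it suffices to compare copies through $v$. A copy through $v$ dies under the move exactly when, inside that copy, $v$ is joined to some vertex of $V_1$, and a new copy is born exactly when $v$ is joined to some vertex of $V_2\setminus\{v\}$; I would inject the dying copies into the new ones. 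Following Proposition \ref{Kl+Km}, I would organise this by which vertex of $H$ plays $v$, freeze the portion of the copy lying in $V_3\cup\cdots\cup V_{k-1}$ (one clique smaller), and compare the two contributions, where $b\ge a+1$ makes the frozen contribution in the more balanced $G'$ dominate that in $G$; summing over the frozen choices gives the claim.

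The main obstacle is the bookkeeping in the second case. In Proposition \ref{Kl+Km} the two cliques of $H$ are vertex-disjoint, so $v$ is joined to a partner inside a copy only through a clique edge, and the types are clean. Here $v$ may be joined to a vertex of $V_1$ either through a clique edge or through one of the arbitrary edges between $\{x,y\}$ and $A$, so a single dying copy can arise in several structurally distinct ways, and which ways occur depends on the role of $v$ and on the non-neighbour sets of $x$ and $y$. Defining the types so that each dying copy is matched injectively to a distinct new one, and checking the resulting clique-count inequalities in $G[V_3\cup\cdots\cup V_{k-1}]$ with the correct coefficients in $a$ and $b$, is the delicate part; the local extension bound $e(Q)\le e_0+O(1)$ needed in the first case is the secondary technical point.
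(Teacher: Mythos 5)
Your preliminary reductions are sound (the case with no cross edges is Proposition \ref{Kl+Km}, and the case $H\supseteq K_{k-1}$ follows from Theorem \ref{thm of H+K-1_is k turan good} with base graph $K_1$), but your Case 1 rests on a lemma that is false: the ``local Tur\'an-type bound'' $e(Q)\le e_0+O(1)$ for every $(k-2)$-clique $Q$ of a $K_k$-free graph. The root cause is the feature you yourself point out: because $H$ has cross edges, $e(Q)$ depends \emph{quadratically} on the sizes of the parts met by $Q$, not merely on the number of vertices left after deleting $Q$. Concretely, take $k=4$, $H=P_4$, and let $G$ be the complete $3$-partite graph with parts of sizes $m+t,\,m,\,m-t$ on $n=3m$ vertices, $t$ a constant. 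For an edge $Q=\{q_1,q_2\}$ with $q_1$ in the middle part and $q_2$ in the smallest part, the dominant term of $e(Q)$ is $\sum_{u\in N(q_2)}d(u)=(m+t)(2m-t)+2m^2=4m^2+tm-t^2$, while in the balanced $T_3(n)$ every such sum is $4m^2+O(m)$ with constants independent of $t$; so for $t$ a large constant, $e(Q)\ge e_0+\Omega(n)$. Restricting the lemma to graphs with $\chi(G)\ge k$ does not rescue it: replace $n$ by $n-C$ above, set $t=3C$, and add a disjoint triangle-free graph of chromatic number $4$ on $C$ vertices ($C$ a large constant); the resulting graph is $K_4$-free with $\chi\ge4$, and the same computation gives $e(Q)-e_0\ge\tfrac13Cm-O(C^2)-O(m)=\Omega(n)$ once $C$ is large. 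Your companion claim fails too: when $(k-1)\nmid n$, two edges of $T_3(n)$ in different positions have extension counts $4m^2-2m-2$ and $4m^2-3m$, already differing by $\Theta(n)$, which is exactly why the cancellation of (\ref{xie2}) (where the extension count is $\mathcal{N}(K_m,\cdot)$ of a vertex-deleted graph and varies only by $O(n^{m-2})$ across positions) does not carry over. Once the per-clique error is $\Theta(n)$ rather than $O(1)$, the loss term $\mathcal{N}(K_{k-2},G)\cdot\Theta(n)=\Theta(n^{k-1})$ has the same order as the Erd\H{o}s--Simonovits gain $\Omega(n^{k-1})$, and the contradiction you need does not follow; repairing this would require a summed, constant-explicit comparison of slack against deficit for every Case-1 graph, which is essentially the proposition itself.

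Case 2 is also not a proof as written: you describe the intended injection from dying copies to newborn copies but explicitly leave the type definitions and the resulting clique-count inequalities (your ``delicate part'') unconstructed, and with cross edges the types genuinely proliferate. It is worth comparing with the paper's actual argument, which avoids the chromatic dichotomy, the Erd\H{o}s--Simonovits input, and all asymptotic error terms. The paper fixes an extremal $K_k$-free $G$, uses Proposition \ref{Kl+Km} only once, to get $|S_{k-2,2}(G)|\le|S_{k-2,2}(T_{k-1}(n))|$, and then constructs an injection $\phi$ between these pair-sets under which $G[V(H_1)\cup V(H_2)]$ always embeds into the induced subgraph on the image pair: pairs whose union contains a $K_{k-1}$ are counted by the statistics $\alpha_j^{(i)}$ and matched into $T_1$ via $\mathcal{N}(K_{k-1},G)\le\mathcal{N}(K_{k-1},T_{k-1}(n))$, where the target induced graph is $K_k$ minus one edge (the unique maximal $k$-vertex $K_k$-free graph, so the embedding is automatic), while pairs whose union contains no $K_{k-1}$ embed into $K_k$ minus two disjoint edges, which is the induced type of every pair in $T_2$. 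Summing $f_G(H,\cdot)\le f_{T_{k-1}(n)}(H,\phi(\cdot))$ then yields $\mathcal{N}(H,G)\le\mathcal{N}(H,T_{k-1}(n))$ exactly, for all large $n$. The missing idea in your proposal is precisely such a global (summed) comparison; the per-clique maximum bound you substitute for it is unavailable.
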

\begin{proof}
Let $G$ be a $K_k$-free graph with $n$ vertices having maximum number of $H$. We write $S=S_{k-2, 2}(G)$ and $T=S_{k-2, 2}(T_{k-1}(n))$ for simplicity. Let $s=|S|$ and $t=|T|.$ Proposition \ref{Kl+Km} shows that $s\leq t.$

Let $S=S_1\cup S_2\cup S_3$, where $S_1=\{(H_1, H_2):H_1$ (with vertex set $\{a_1, a_2, \ldots, a_{k-2}\}$) and $H_2$ (with vertex set $\{b_1, b_2\}$) are disjoint subgraphs in $G, G[V(H_1)\cup\{b_1\}]$ is a copy of $K_{k-1}$, and $H_2$ is a copy of $K_2\}$, $S_2=\{(H_1, H_2):H_1$ (with vertex set $\{a_1, a_2, \ldots, a_{k-2}\}$) and $H_2$ (with vertex set $\{b_1, b_2\}$) are disjoint subgraphs in $G, H_1$ is a copy of $K_{k-2}, H_2$ is a copy of $K_2, G[a_1, a_2, \ldots, a_{k-3}, b_1, b_2]$ is a copy of $K_{k-1}, G[a_1, a_2, \ldots, a_{k-2}, b_1]$ is not a copy of $K_{k-1}$, and $G[a_1, a_2, \ldots, a_{k-2}, b_2]$ is not a copy of $K_{k-1}\}$, and $S_3=\{(H_1, H_2):H_1$ and $H_2$ are disjoint subgraphs in $G, H_1$ is a copy of $K_{k-2}, H_2$ is a copy of $K_2$, and $G[V(H_1)\cup V(H_2)]$ contains no $K_{k-1}\}.$ Let $s_1=|S_1|$, $s_2=|S_2|$, and $s_3=|S_3|.$ Then $s=s_1+s_2+s_3.$

Let $T=T_1\cup T_2$, where $T_1=\{(H_1, H_2):H_1$ (with vertex set $\{a_1, a_2, \ldots, a_{k-2}\}$) and $H_2$ (with vertex set $\{b_1, b_2\}$) are disjoint subgraphs in $T_{k-1}(n), T_{k-1}(n)[V(H_1)\cup\{b_1\}]$ is a copy of $K_{k-1}$, and $H_2$ is a copy of $K_2\}$, and $T_2=\{(H_1, H_2):H_1$ and $H_2$ are disjoint subgraphs in $T_{k-1}(n), H_1$ is a $K_{k-2}, H_2$ is a $K_2$, and $T_{k-1}(n)[V(H_1)\cup V(H_2)]$ contains no $K_{k-1}\}.$ Let $t_1=|T_1|$ and $t_2=|T_2|.$ Then $t=t_1+t_2.$

Let $K^{(i)}(1\leq i\leq m)$ be the different copies of $K_{k-1}$ in $G.$ For every $i$, define $\alpha_j^{(i)}=|\{v\in V(G)\backslash V(K^{(i)}): v$ is adjacent to exactly $j$ vertices of $K^{(i)}\}|.$ Then $\sum_{j=1}^{k-2}\alpha_j^{(i)}\leq n-(k-1)$ holds for every $i.$ It is also easy to verify that
$$
s_1=\sum_{i=1}^m\sum_{j=1}^{k-2}j\alpha_j^{(i)},
$$
and
$$
s_2=\sum_{i=1}^m\alpha_{k-3}^{(i)}.
$$
Thus
$$
s_1+s_2\leq(k-2)\sum_{i=1}^m\sum_{j-1}^{k-2}\alpha_{j}^{(i)}\leq(k-2)m(n-(k-1)).
$$
Note that $m$ is the total number of copies of $K_{k-1}$ in $G$, i.e. $m=\mathcal{N}(K_{k-1}, G).$ So
$$
s_1+s_2\leq(k-2)\mathcal{N}(K_{k-1}, G)(n-(k-1))\leq(k-2)\mathcal{N}(K_{k-1}, T_{k-1}(n))(n-(k-1))=t_1.
$$
Thus there is an injection $\phi_1:S_1\cup S_2\rightarrow T_1$ such that if $\phi_1(H_1, H_2)=(H_1', H_2')$, then $G[V(H_1)\cup V(H_2)]$ is isomorphic to a subgraph of $T_{k-1}(n)[V(H_1')\cup V(H_2')]$ (indeed, the graph $T_{k-1}(n)[V(H_1')\cup V(H_2')]$ has $\binom{k}{2}-1$ edges, which is the largest $k$-vertex $K_k$-free graph).

Now consider $S_3$ and $T_2.$ For every $(H_1, H_2)\in S_3,$ let $V(H_1)=\{a_1, a_2, \ldots, a_{k-2}\}$ and $V(H_2)=\{b_1, b_2\}$. Since $H_1$ is a copy of $K_{k-2},$ $H_2$ is a copy of $K_2,$ and $G[a_1, a_2, \ldots, a_{k-2}, b_1, b_2]$ contains no $K_{k-1},$ it implies that there are distinct $a_{i_1}$ and $a_{i_2},$ such that $b_1$ is not adjacent to $a_{i_1}$ and $b_2$ is not adjacent to $a_{i_2},$ in $G.$ On the other hand, for every $(H_1', H_2')\in T_2,$ let $V(H_1')=\{a_1', a_2', \ldots, a_{k-2}'\}$ and $V(H_2')=\{b_1', b_2'\},$ $b_1'$ is not adjacent to exactly one of $a_i'$ $(1\leq i\leq l),$ and $b_2'$ is not adjacent to exactly another one of $a_i'$ $(1\leq i\leq l),$ in $T_{k-1}(n).$ Note that
$$
|S_3|=s_3=s-s_1-s_2\leq t-s_1-s_2=(t_1-s_1-s_2)+t_2=|(T_1\backslash\phi_1(S_1\cup S_2))\cup T_2|.
$$
So there is an injection $\phi_2:S_3\rightarrow (T_1\backslash\phi_1(S_1\cup S_2))\cup T_2$ such that if $\phi_2(H_1, H_2)=(H_1', H_2'),$ then $G[V(H_1)\cup V(H_2)]$ is isomorphic to a subgraph of $T_{k-1}(n)[V(H_1')\cup V(H_2')].$

Combining $\phi_1$ and $\phi_2,$ we find an injection $\phi$ from $S$ to $T,$ such that if $\phi(H_1, H_2)=(H_1', H_2'),$ then $G[V(H_1)\cup V(H_2)]$ is isomorphic to a subgraph of $T_{k-1}(n)[V(H_1')\cup V(H_2')].$ Therefore,
\begin{equation*}
  \begin{split}
     \mathcal{N}(H, G)&=\frac{1}{a}\sum_{(H_1, H_2)\in S}f_G(H, (H_1, H_2))\\
                      &\leq\frac{1}{a}\sum_{\phi(H_1, H_2)\in \phi(S)}f_{T_{k-1}(n)}(H, \phi(H_1, H_2))\\
                      &\leq\frac{1}{a}\sum_{(H_1', H_2')\in T}f_{T_{k-1}(n)}(H, (H_1', H_2'))\\
                      &=\mathcal{N}(H, T_{k-1}(n)).
   \end{split}
\end{equation*}
This completes the proof.
\end{proof}

By the above proposition, we can get the following corollary which has been proved in \cite{2020arXiv200616150G}.
\begin{corollary}
  $P_4$ is $4$-Tur\'an-good.
\end{corollary}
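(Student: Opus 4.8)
The plan is to recognize $P_4$ as a special case of the family of graphs handled by Proposition~\ref{K2+Kk-2}, specialized to $k=4$. When $k=4$ we have $K_{k-2}=K_2$, so the proposition concerns every graph $H$ obtained by taking two vertex-disjoint copies of $K_2$ and joining their four vertices by an arbitrary set of cross edges, subject only to the requirement that the result is not a copy of $K_4$. I would therefore first exhibit $P_4$ explicitly in this form, and then invoke the proposition.

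Writing the path as $a-b-c-d$, I would take the first $K_2$ to be the edge $\{a,b\}$ and the second $K_2$ to be the edge $\{c,d\}$, and add the single cross edge $bc$. The resulting graph has edge set $\{ab,bc,cd\}$, which is precisely $P_4$; since it has only three edges it is certainly not a copy of $K_4$. Hence $P_4$ satisfies the hypotheses of Proposition~\ref{K2+Kk-2} with $k=4$, so $P_4$ is $4$-Tur\'an-good, i.e. $ex(n,P_4,K_4)=\mathcal{N}(P_4,T_3(n))$ for all large $n$. This also recovers the earlier-cited Proposition~\ref{P4 is 4 Turan good}.

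There is essentially no obstacle here beyond the bookkeeping of the identification: one only needs to check that the two intra-clique edges together with the single chosen cross edge reproduce $P_4$, and that $P_4\ne K_4$, both of which are immediate. The one point worth confirming is that the phrase ``join the vertices $\ldots$ by edges arbitrarily'' in the statement of Proposition~\ref{K2+Kk-2} genuinely permits adding just one cross edge rather than forcing all four; since it does, the specialization goes through with no further argument.
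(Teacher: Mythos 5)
Your proposal is correct and matches the paper's own derivation exactly: the paper obtains this corollary as an immediate consequence of Proposition~\ref{K2+Kk-2} with $k=4$, viewing $P_4$ as two disjoint copies of $K_2$ joined by a single cross edge (which is clearly not $K_4$). Your additional check that ``join \ldots by edges arbitrarily'' allows a single cross edge is right, and is confirmed by the paper's own $P_3$ example following the definition of $f_G$.
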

Also, we have the following corollary which will be used later.
\begin{corollary}\label{two small 5 turan good graphs}
  \FlagGraph{5}{1--2,2--3,1--3,1--4,4--5,1--5} is $5$-Tur\'an-good.
\end{corollary}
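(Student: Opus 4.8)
The plan is first to read off the graph $H$ depicted in the statement. Its edge set is $\{12,13,23\}\cup\{14,15,45\}$, so vertex $1$ has degree $4$ while vertices $2,3,4,5$ each have degree $2$; concretely $H$ is the bowtie, two triangles $\{1,2,3\}$ and $\{1,4,5\}$ glued at the common vertex $1$. The natural first instinct, applying Theorem \ref{thm of H+K-1_is k turan good}, is doomed here, because for $k=5$ that construction always produces a graph containing a $K_{4}$, whereas $H$ has no $K_4$ at all; so I would abandon that route immediately.

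The key observation is that this $H$ already fits the hypothesis of Proposition \ref{K2+Kk-2} with $k=5$. Indeed, partition $V(H)=V(K_3)\cup V(K_2)$ by taking the triangle $K_3$ on $\{1,2,3\}$ and the edge $K_2$ on $\{4,5\}$; the only edges joining the two parts are $14$ and $15$. Thus $H$ is obtained from a vertex-disjoint $K_3$ and $K_2$ by joining one vertex of the $K_3$ to both vertices of the $K_2$, which is a specific instance of the arbitrary join of $K_{k-2}=K_3$ to $K_2$ permitted in the proposition when $k=5$. Since $H$ has only $6$ edges it is not a copy of $K_5$ (which has $10$), so the excluded case of the proposition does not arise.

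I would therefore simply invoke Proposition \ref{K2+Kk-2} with $k=5$ to conclude that $H$ is $5$-Tur\'an-good. I expect no genuine obstacle at this level, since the whole argument is already packaged inside Proposition \ref{K2+Kk-2} (which itself rests on Proposition \ref{Kl+Km}). The only point that deserves a moment's care is the legitimacy of the clique decomposition: one must confirm that the glued triangles really split as a $K_3$ together with a disjoint $K_2$ admitting the required join. This is forced, because the only other way to cover all five vertices by two cliques would be a $K_4$ together with a $K_1$, and $H$ contains no $K_4$; hence the $K_3$--$K_2$ split is the unique option and the application of the proposition is unambiguous.
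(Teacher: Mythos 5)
Your proof is correct and takes essentially the same route as the paper: the corollary is stated as an immediate consequence of Proposition \ref{K2+Kk-2}, instantiated with $k=5$, $K_{k-2}=K_3$ on $\{1,2,3\}$, $K_2$ on $\{4,5\}$, and joining edges $14$, $15$, the resulting bowtie clearly not being a copy of $K_5$. Your side remarks (that Theorem \ref{thm of H+K-1_is k turan good} cannot produce a $K_4$-free graph for $k=5$, and that the clique split is essentially forced) are accurate but not needed for the argument.
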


\section{$P_4$ is $k$-Tur\'an-good}

In this section, we prove the following theorem.
\begin{theorem}\label{thm of P4 k turan good}
  $P_4$ is $k$-Tur\'an-good for $k\ge5.$
\end{theorem}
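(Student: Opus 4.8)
The plan is to count copies of $P_4$ through the framework of Section 2, viewing $P_4$ as two disjoint edges (two copies of $K_2$) joined by a single ``middle'' edge. Concretely, for an ordered pair $(H_1,H_2)\in S_{2,2}(G)$ of disjoint edges, $f_G(P_4,(H_1,H_2))$ is exactly the number of edges of $G$ joining $V(H_1)$ to $V(H_2)$: each such connecting edge is the middle edge of a unique $P_4$ whose two end-edges are $H_1$ and $H_2$. Thus $f_G(P_4,(H_1,H_2))\in\{0,1,2,3,4\}$ and $\mathcal{N}(P_4,G)=\frac12\sum_{(H_1,H_2)\in S_{2,2}(G)}f_G(P_4,(H_1,H_2))$ (here $a=2$). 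Exactly as in the proofs of Propositions \ref{K1+Kl} and \ref{K2+Kk-2}, it then suffices to build an injection $\phi\colon S_{2,2}(G)\to S_{2,2}(T_{k-1}(n))$ with $f_G(P_4,(H_1,H_2))\le f_{T_{k-1}(n)}(P_4,\phi(H_1,H_2))$ for every pair.

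To construct $\phi$ I would sort the pairs by their $f$-value, driven by two observations. First, in the Turán graph every pair of disjoint edges has at least two connecting edges (two disjoint edges occupy between two and four parts, forcing $f\in\{2,3,4\}$), so \emph{every} element of $T:=S_{2,2}(T_{k-1}(n))$ is an admissible target for any source with $f_G\le2$. Second, a pair with $f=4$ spans a $K_4$ and a pair with $f=3$ spans a $K_4-e$ (diamond); conversely each $K_4$ yields $6$ such ordered pairs and each diamond yields $4$. Hence, for \emph{any} graph, $\#\{f=4\}=6\,\mathcal{N}(K_4,\cdot)$ and $\#\{f\ge3\}=6\,\mathcal{N}(K_4,\cdot)+4\,\mathcal{N}(K_4-e,\cdot)$, with identical multiplicities in $G$ and in $T_{k-1}(n)$.

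With these identities the injection is assembled in three stages, matching the $\phi_1,\phi_2$ bookkeeping of Proposition \ref{K2+Kk-2}. Stage 1 maps the $f_G=4$ pairs into the $f_T=4$ pairs; this is possible because $\mathcal{N}(K_4,G)\le\mathcal{N}(K_4,T_{k-1}(n))$ by Theorem \ref{Kr is k turan good}, which requires $4\le k-1$, i.e.\ $k\ge5$ (this is the only place the hypothesis $k\ge5$ is used). Stage 2 maps the $f_G=3$ pairs into the as-yet-unused targets with $f_T\ge3$; the required inequality $\#\{f_G\ge3\}\le\#\{f_T\ge3\}$ follows from the displayed identity together with $\mathcal{N}(K_4,G)\le\mathcal{N}(K_4,T_{k-1}(n))$ and the fact that the diamond $K_4-e$ is $k$-Tur\'an-good for $k\ge4$ by Proposition \ref{K1+Kl} (it is a $K_3$ with one extra vertex joined to two of its vertices, and is not $K_4$). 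Stage 3 maps the remaining pairs (those with $f_G\le2$) into all remaining targets; there are enough because $s:=|S_{2,2}(G)|=2\,\mathcal{N}(M_2,G)\le2\,\mathcal{N}(M_2,T_{k-1}(n))=|S_{2,2}(T_{k-1}(n))|=:t$ by Theorem \ref{Ml is k turan good} (or Proposition \ref{Kl+Km} with $l=m=2$), and every remaining target has $f_T\ge2$. Combining the three stages yields $\phi$, and the standard chain of inequalities then gives $\mathcal{N}(P_4,G)\le\mathcal{N}(P_4,T_{k-1}(n))$.

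The main obstacle is less any single estimate than getting the bookkeeping exactly right: one must check that $f_G$ genuinely equals the number of connecting edges, that the $f=3$ and $f=4$ classes are governed \emph{precisely} by the diamond and $K_4$ counts with the same multiplicities in $G$ and in $T_{k-1}(n)$, and that the staged injection never runs out of targets — which hinges on the clean fact that no pair of disjoint edges in $T_{k-1}(n)$ has fewer than two connecting edges. Once these are in place, every needed domination is quoted from Theorem \ref{Kr is k turan good}, Proposition \ref{K1+Kl}, and Theorem \ref{Ml is k turan good}.
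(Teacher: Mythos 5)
Your framework and two of your three counting inequalities are sound: $f_G(P_4,(H_1,H_2))$ is indeed the number of connecting edges, $a=2$, every pair of disjoint edges in $T_{k-1}(n)$ has $f\ge 2$, $\#\{f=4\}=6\mathcal{N}(K_4,\cdot)$, and $|S_{2,2}(\cdot)|=2\mathcal{N}(M_2,\cdot)$, so Stage 1 follows from Theorem \ref{Kr is k turan good} and Stage 3 from Theorem \ref{Ml is k turan good}. The gap is Stage 2, and it comes from conflating induced and non-induced copies of the diamond. Your identity $\#\{f\ge 3\}=6\mathcal{N}(K_4,\cdot)+4\mathcal{N}(K_4-e,\cdot)$ is valid only when the diamond count means \emph{induced} diamonds: a pair of disjoint edges inside a $K_4$ has $f=4$, yet that $K_4$ contains six diamond \emph{subgraphs} (test $G=K_4$: the true value of $\#\{f\ge3\}$ is $6$, while $6\mathcal{N}(K_4,K_4)+4\mathcal{N}(K_4-e,K_4)=6+24=30$ with subgraph counts). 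But Proposition \ref{K1+Kl} is a statement about subgraph copies. Writing $I(D,\cdot)$ for the induced diamond count and $\mathcal{N}(D,\cdot)$ for the subgraph count, one has $I(D,\cdot)=\mathcal{N}(D,\cdot)-6\mathcal{N}(K_4,\cdot)$, so the inequality Stage 2 needs, namely $6\mathcal{N}(K_4,G)+4I(D,G)\le 6\mathcal{N}(K_4,T_{k-1}(n))+4I(D,T_{k-1}(n))$, is equivalent to
\begin{equation*}
4\bigl[\mathcal{N}(D,T_{k-1}(n))-\mathcal{N}(D,G)\bigr]\;\ge\;18\bigl[\mathcal{N}(K_4,T_{k-1}(n))-\mathcal{N}(K_4,G)\bigr].
\end{equation*}
The results you quote only say that both bracketed deficits are nonnegative; they give no lower bound on the diamond deficit in terms of the $K_4$ deficit. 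In fact, applying $\mathcal{N}(D,G)\le\mathcal{N}(D,T_{k-1}(n))$ pushes you in the wrong direction: you would then need $\mathcal{N}(K_4,G)\ge\mathcal{N}(K_4,T_{k-1}(n))$, the reverse of Zykov's inequality. So Stage 2, and hence the injection, is unproved.

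The root cause is that your staged injection, rewritten in subgraph counts, amounts to the bound $2\mathcal{N}(P_4,G)\le 4\mathcal{N}(M_2,G)+4\mathcal{N}(D,G)-12\mathcal{N}(K_4,G)$, a linear combination with a \emph{negative} coefficient, to which termwise Tur\'an-goodness cannot be applied. The paper's proof is structurally the same kind of argument (classify the $4$-vertex configurations, dominate by Tur\'an-good counts, check tightness on $T_{k-1}(n)$) but chooses its third graph so that all coefficients stay positive: it uses $K_3\cup K_1$ (equivalently, triangle counts) instead of the diamond, obtaining $\mathcal{N}(P_4,G)\le 2\mathcal{N}(M_2,G)+\mathcal{N}(K_3\cup K_1,G)+2\mathcal{N}(K_4,G)$, where each summand is at most its Tur\'an value and the total equals $\mathcal{N}(P_4,T_{k-1}(n))$. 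Your argument can be repaired the same way; alternatively you would have to prove separately that $6\mathcal{N}(K_4,\cdot)+4I(D,\cdot)$ is maximized among $K_k$-free graphs by the Tur\'an graph, which is a new Tur\'an-goodness-type statement requiring its own proof, not a consequence of the cited results.
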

\begin{proof}
Given an integer $k\ge5,$ let $G$ be an $n$-vertex graph which is $K_k$-free. The graphs in the first row of Table \ref{4 vertex induced graph table} are all possible induced $4$-vertex subgraphs which contain at least one $P_4.$ We assume that the number of them in $G$ are as that in the second row. And by an easy counting, we get all the values of the table.

\begin{table}[h]
\caption{Induced $4$-vertex subgraphs containing $P_4$}\label{4 vertex induced graph table}
\begin{center}
\begin{tabular}{|c|c|c|c|c|c|c| p{5cm}|}
\hline
Induced $4$-vertex subgraphs containing $P_4$&\FlagGraph{4}{1--2,2--3,3--4} &\FlagGraph{4}{1--2,2--3,3--4,4--1} &
 \FlagGraph{4}{1--2,2--4,4--1,2--3} & \FlagGraph{4}{1--2,2--3,3--1,4--1,4--2}& \FlagGraph{4}{1--2,1--3,1--4,2--3,2--4,3--4} \\ \hline
 Number of each subgraph in $G$& $x_1$ & $x_2$&$x_3$ &$x_4$&$x_5$ \\ \hline
 Number of $M_2$ in each subgraph &1&2&1&2&3 \\ \hline
 Number of $\FlagGraph{4}{1--2,2--3,1--3}$ in each subgraph &0&0&1&2&4 \\ \hline
 Number of $K_4$ in each subgraph &0&0&0&0&1 \\ \hline
 Number of $P_4$ in each subgraph &1&4&3&6&12 \\
\hline
\end{tabular}
\end{center}
\end{table}

Since $K_3$ is $k$-Tur\'an-good by Theorem \ref{Kr is k turan good}, it follows directly that \FlagGraph{4}{1--2,2--3,1--3} is also $k$-Tur\'an-good.
Because of $M_2,$ \FlagGraph{4}{1--2,2--3,1--3} and $K_4$ are $k$-Tur\'an-good, we have
\begin{equation*}
\left\{
\begin{array}{cll}
  x_1+2x_2+x_3+2x_4+3x_5 &\le\mathcal{N}(M_2,G) &\le  \mathcal{N}(M_2,T_{k-1}(n)), \\
  x_3+2x_4+4x_5 &\le\mathcal{N}(\FlagGraph{4}{1--2,2--3,1--3},G) &\le  \mathcal{N}(\FlagGraph{4}{1--2,2--3,1--3},T_{k-1}(n)), \\
  x_5 &\le \mathcal{N}(K_4,G) &\le \mathcal{N}(K_4,T_{k-1}(n)).
\end{array}
\right.
\end{equation*}

Therefore,
\begin{equation*}
\begin{aligned}
\mathcal{N}(P_4,G) &=x_1+4x_2+3x_3+6x_4+12x_5\\
                   &\le2(x_1+2x_2+x_3+2x_4+3x_5)+(x_3+2x_4+4x_5)+2x_5\\
                   &\le2\mathcal{N}(M_2,T_{k-1}(n))+\mathcal{N}(\FlagGraph{4}{1--2,2--3,1--3},T_{k-1}(n))+2\mathcal{N}(K_4,T_{k-1}(n)).
\end{aligned}
\end{equation*}
Now we only need to show that $2\mathcal{N}(M_2,T_{k-1}(n))+\mathcal{N}(\FlagGraph{4}{1--2,2--3,1--3},T_{k-1}(n))+2\mathcal{N}(K_4,T_{k-1}(n))=\mathcal{N}(P_4,T_{k-1}(n)).$ Indeed, by easy counting, we have the following
\begin{equation*}
  \left\{
  \begin{array}{cl}
    \mathcal{N}(P_4, T_{k-1}(n)) &= 12\mathcal{N}(K_4, T_{k-1}(n))+6\mathcal{N}(K_{1,1,2}, T_{k-1}(n))+4\mathcal{N}(K_{2,2}, T_{k-1}(n)),  \\
    \mathcal{N}(\FlagGraph{4}{1--2,2--3,1--3}, T_{k-1}(n))&= 4\mathcal{N}(K_4, T_{k-1}(n)) + 2\mathcal{N}(K_{2,2}, T_{k-1}(n)),\\
    \mathcal{N}(M_2, T_{k-1}(n)) &=3\mathcal{N}(K_4, T_{k-1}(n)) +2\mathcal{N}(K_{1,1,2}, T_{k-1}(n))+2\mathcal{N}(K_{2,2}, T_{k-1}(n)).
  \end{array}
  \right.
\end{equation*}
With the above three equalities, it can be derived that
$$2\mathcal{N}(M_2,T_{k-1}(n))+\mathcal{N}(\FlagGraph{4}{1--2,2--3,1--3},T_{k-1}(n))+2\mathcal{N}(K_4,T_{k-1}(n))=\mathcal{N}(P_4,T_{k-1}(n)).$$
This completes the proof.

\end{proof}

\section{$P_5$ is $k$-Tur\'an-good}

In this section, we will prove that $P_5$ is $k$-Tur\'an-good for all $k\ge4.$ The case $k=4$ is much easier, so we prove it first.
\begin{theorem}
  $P_5$ is $4$-Tur\'an-good.
\end{theorem}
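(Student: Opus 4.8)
The plan is to follow the counting strategy behind the proof of Theorem~\ref{thm of P4 k turan good}, but now with $5$-vertex subgraphs. Fix a $K_4$-free graph $G$ on $n$ vertices. Since a copy of $P_5$ occupies exactly one $5$-subset, $\mathcal{N}(P_5,G)=\sum_S \mathcal{N}(P_5,G[S])$, the sum being over all $5$-subsets $S$; and as $G$ is $K_4$-free, each $G[S]$ is a $K_4$-free graph on $5$ vertices, necessarily connected whenever it contains a $P_5$. The idea is to bound $\mathcal{N}(P_5,G[S])$ on each $5$-subset by counts of graphs already known to be $4$-Turán-good, and then sum.

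Before choosing the blocks I would record which $5$-subgraphs appear in $T_3(n)$. Being complete $3$-partite, $T_3(n)$ induces on any $5$-subset a complete multipartite graph whose part sizes form a partition of $5$ into at most three parts, and the only such graphs containing $P_5$ are $K_{2,3}$, $K_{1,1,3}$ and $K_{1,2,2}=T_3(5)$; these three are precisely where the eventual bound must be tight. As building blocks I would take $M_2\sqcup K_1$ and $K_3\sqcup K_2$, both $4$-Turán-good (the first by the Remark after Proposition~\ref{Kl+Km}, the second by Corollary~\ref{K3+K2 is k turan good}), and both containing a pair of disjoint edges, so that neither has a copy inside a $5$-subset of $T_3(n)$ inducing $\overline{K_5}$ or $K_{1,4}$. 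Because $\sum_S\mathcal{N}(M_2,G[S])=\mathcal{N}(M_2\sqcup K_1,G)$ and $\sum_S\mathcal{N}(K_3\sqcup K_2,G[S])=\mathcal{N}(K_3\sqcup K_2,G)$, it suffices to establish the pointwise inequality
\[
\mathcal{N}(P_5,F)\le \mathcal{N}(M_2,F)+\tfrac72\,\mathcal{N}(K_3\sqcup K_2,F)
\]
for every connected $K_4$-free graph $F$ on $5$ vertices.

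Granting this inequality, summing over $S$ and invoking the $4$-Turán-goodness of the two blocks gives $\mathcal{N}(P_5,G)\le\mathcal{N}(M_2\sqcup K_1,T_3(n))+\tfrac72\mathcal{N}(K_3\sqcup K_2,T_3(n))$, and it remains to see the right-hand side equals $\mathcal{N}(P_5,T_3(n))$. This is where the coefficients come from: on $K_{2,3}$ and $K_{1,1,3}$ one computes $\mathcal{N}(P_5,\cdot)=\mathcal{N}(M_2,\cdot)=6$ and $\mathcal{N}(K_3\sqcup K_2,\cdot)=0$, forcing coefficient $1$ on $M_2$; on $K_{1,2,2}$ one computes $\mathcal{N}(P_5,\cdot)=24$, $\mathcal{N}(M_2,\cdot)=10$, $\mathcal{N}(K_3\sqcup K_2,\cdot)=4$, forcing coefficient $7/2$. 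With these values the inequality is an equality on all three graphs; since the blocks occur only on the three nontrivial $5$-subsets of $T_3(n)$ and $P_5$ occurs nowhere else, the displayed right-hand side collapses exactly to $\mathcal{N}(P_5,T_3(n))$.

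The main obstacle is proving the pointwise inequality for all $F$, which I expect to be a finite but careful case analysis, as in the $P_4$ table. It has two regimes. When $F$ is triangle-free the second term vanishes and one needs $\mathcal{N}(P_5,F)\le\mathcal{N}(M_2,F)$; here $K_{2,3}$ and $C_5$ are extremal (both give equality), and no triangle-free $5$-vertex graph should exceed its matching count. When $F$ contains a triangle the surplus must be absorbed by $\tfrac72\mathcal{N}(K_3\sqcup K_2,F)$; the delicate point is that $K_{1,2,2}$ is the unique graph attaining the ratio $7/2$, so one must verify that every other triangle-containing $F$ stays strictly below it — in particular that each $F$ with $\mathcal{N}(P_5,F)>\mathcal{N}(M_2,F)$ really does contain a triangle disjoint from an edge (for instance $K_{1,2,2}$ with one edge deleted, and the gem obtained by joining a vertex to all four vertices of a $P_4$, each give $\mathcal{N}(P_5,\cdot)=10$, $\mathcal{N}(M_2,\cdot)=7$, $\mathcal{N}(K_3\sqcup K_2,\cdot)=2$, ratio $3/2$). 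Should some configuration resist these two blocks, the natural remedy is to adjoin the further $4$-Turán-good graph $K_{1,2,2}=T_3(5)$ (good by Corollary~\ref{paths are 3 Turan good}), which is hosted only by $K_{1,2,2}$ itself, and redistribute the coefficient $7/2$ between $K_3\sqcup K_2$ and it.
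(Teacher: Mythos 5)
Your proposal is correct and is essentially the paper's own strategy: decompose $\mathcal{N}(P_5,G)$ over $5$-vertex induced subgraphs, dominate the $P_5$-count pointwise by a nonnegative combination of $5$-vertex graphs already known to be $4$-Tur\'an-good, sum over all $5$-subsets, and check that the combination collapses to $\mathcal{N}(P_5,T_3(n))$ on the Tur\'an graph. The one substantive difference is the combination chosen: the paper uses three blocks with integer coefficients --- $M_2$ with an isolated vertex, $K_3\sqcup K_2$, and the bowtie (two triangles sharing a vertex, which is $4$-Tur\'an-good via Theorem~\ref{thm of H+K-1_is k turan good}) with coefficients $1,3,1$ --- whereas you use only the first two blocks with coefficients $1$ and $\frac{7}{2}$, avoiding the bowtie and Theorem~\ref{thm of H+K-1_is k turan good} altogether. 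The finite verification you defer is exactly the content of the paper's Table~\ref{5 vertex induced graph table}, and it does close with your coefficients: for the thirteen connected $K_4$-free $5$-vertex graphs containing a spanning path, the triples $\left(\mathcal{N}(P_5,F),\mathcal{N}(M_2,F),\mathcal{N}(K_3\sqcup K_2,F)\right)$ are, in the order of that table, $(1,3,0)$, $(5,5,0)$, $(2,4,1)$, $(2,4,0)$, $(2,4,0)$, $(4,5,1)$, $(1,3,0)$, $(7,6,1)$, $(10,7,2)$, $(14,8,2)$, $(6,6,0)$, $(6,6,0)$, $(24,10,4)$, and in every case $\mathcal{N}(P_5,F)\le\mathcal{N}(M_2,F)+\frac{7}{2}\mathcal{N}(K_3\sqcup K_2,F)$, with equality precisely at $C_5$, $K_{2,3}$, $K_{1,1,3}$ and $K_{1,2,2}$; in particular your fallback block $T_3(5)$ is never needed. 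One further remark: your spot-checks give the correct counts, and in fact the paper's table contains two miscounts --- the gem (column $9$) has $10$ spanning paths, not $9$, and the graph $C_5$ plus the two chords $\{1,3\},\{2,5\}$ (column $10$) has $14$, not $12$ --- though neither error harms the paper's argument, since its inequality $\mathcal{N}(P_5,F)\le\mathcal{N}(M_2,F)+3\,\mathcal{N}(K_3\sqcup K_2,F)+\mathcal{N}(\text{bowtie},F)$ still holds with the corrected values ($10\le 14$ and $14\le 14$). Also, a small slip in your illustrative aside: deleting a cross edge between the two $2$-classes of $K_{1,2,2}$ yields exactly the gem, while deleting an edge at the center vertex yields a different graph (with $14$ spanning paths), so your two ``examples'' with counts $(10,7,2)$ are really one graph; this does not affect the argument.
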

\begin{proof}
  Let $G$ be an $n$-vertex graph which is $K_4$-free. Similar to the proof of Theorem \ref{thm of P4 k turan good}, in Table \ref{5 vertex induced graph table}, we list all possible induced $5$-vertex subgraphs containing at least one $P_5,$ and assume that each has $x_i$ copies in $G$ for $1\le i\le 13.$

 \begin{table}[h]
\caption{Induced $5$-vertex subgraphs of $K_4$-free graphs containing $P_5$}\label{5 vertex induced graph table}
\begin{center}
\resizebox{\textwidth}{!}{
\begin{tabular}{|c|c|c|c|c|c|c|c|c|c|c|c|c|c| p{5cm}|}
\hline
\makecell{Induced $5$-vertex \\subgraphs containing $P_5$}& \FlagGraph{5}{1--2,2--3,3--4,4--5}& \FlagGraph{5}{1--2,2--3,3--4,4--5,1--5}&\FlagGraph{5}{1--2,2--3,3--4,4--5,5--3}&
\FlagGraph{5}{1--2,2--3,3--4,4--1,1--5}&\FlagGraph{5}{1--2,2--3,3--4,4--1,1--5,1--3}&
\FlagGraph{5}{1--2,2--3,3--4,4--1,1--5,2--4}&\FlagGraph{5}{1--2,2--3,3--4,2--5,3--5}&
\FlagGraph{5}{1--2,2--3,3--4,4--5,5--1,2--5}&\FlagGraph{5}{1--2,2--3,3--4,4--5,5--1,2--5,2--4}&
\FlagGraph{5}{1--2,2--3,3--4,4--5,1--5,1--3,2--5}& \FlagGraph{5}{1--3,1--4,1--5,2--3,2--4,2--5}&
\FlagGraph{5}{1--2,1--3,1--4,1--5,2--3,2--4,2--5}&\FlagGraph{5}{1--2,1--3,1--4,1--5,2--4,2--5,3--4,3--5}\\ \hline
 Number of each subgraph in $G$& $x_1$ & $x_2$&$x_3$ &$x_4$&$x_5$&$x_6$&$x_7$&$x_8$&$x_9$&$x_{10}$&$x_{11}$&$x_{12}$&$x_{13}$ \\ \hline
 Number of $\FlagGraph{5}{1--2,3--4}$ in each subgraph &3&5&4&4&4&5&3&6&7&8&6&6&10 \\ \hline
 Number of \FlagGraph{5}{1--2,3--4,4--5,5--3} in each subgraph &0&0&1&0&0&1&0&1&2&2&0&0&4 \\ \hline
 Number of \FlagGraph{5}{1--2,2--3,1--3,1--4,5--1,4--5} in each subgraph &0&0&0&0&0&0&0&0&1&0&0&0&2 \\ \hline
 Number of $P_5$ in each subgraph & 1&5&2&2&2&4&1&7&9&12&6&6&24\\
\hline
\end{tabular}}
\end{center}
\end{table}

  Now we have that
  \begin{equation*}
    \mathcal{N}(P_5,G)=1x_1+5x_2+2x_3+2x_4+2x_5+4x_6+1x_7+7x_8+9x_{9}+12x_{10}+6x_{11}+6x_{12}+24x_{13}.
  \end{equation*}

  By Theorem \ref{thm of H+K-1_is k turan good}, we know that \FlagGraph{5}{1--2,3--4,4--5,5--3} and \FlagGraph{5}{1--2,2--3,1--3,4--5,5--1,1--4} are $4$-Tur\'an-good. And since $M_2$ is $4$-Tur\'an-good, so is   \FlagGraph{5}{1--2,3--4}. Therefore
  \begin{equation*}
    \left\{
    \begin{array}{ccc}
      3x_1+5x_2+4x_3+4x_4+4x_5+5x_6+ \\
      3x_7+6x_8+7x_9+8x_{10}+6x_{11}+6x_{12}+10x_{13}&\le \mathcal{N}(\FlagGraph{5}{1--2,3--4},G) &\le \mathcal{N}(\FlagGraph{5}{1--2,3--4},T_3(n)),  \\
       1x_3+1x_6+1x_8+2x_9+2x_{10}+4x_{13}&\le \mathcal{N}(\FlagGraph{5}{1--2,3--4,4--5,5--3},G) &\le \mathcal{N}(\FlagGraph{5}{1--2,3--4,4--5,5--3},T_3(n)),\\
       1x_9+2x_{13}&\le \mathcal{N}(\FlagGraph{5}{1--2,2--3,1--3,4--5,5--1,1--4},G) &\le\mathcal{N}(\FlagGraph{5}{1--2,2--3,1--3,4--5,5--1,1--4},T_3(n)).
    \end{array}
    \right.
  \end{equation*}

  So we have that
\begin{equation*}
\begin{aligned}
  \mathcal{N}(P_5,G) &\le \mathcal{N}(\FlagGraph{5}{1--2,3--4},G)+3\mathcal{N}(\FlagGraph{5}{1--2,3--4,4--5,5--3},G)
  +\mathcal{N}(\FlagGraph{5}{1--2,2--3,1--3,4--5,5--1,1--4},G)\\
  &\le \mathcal{N}(\FlagGraph{5}{1--2,3--4},T_3(n))+3\mathcal{N}(\FlagGraph{5}{1--2,3--4,4--5,5--3},T_3(n))+
  \mathcal{N}(\FlagGraph{5}{1--2,2--3,1--3,4--5,5--1,1--4},T_3(n)).
\end{aligned}
\end{equation*}

Since
\begin{equation*}
  \left\{
  \begin{array}{cl}
    \mathcal{N}(P_5, T_3(n)) &= 6\mathcal{N}(K_{2,3}, T_3(n))+6\mathcal{N}(K_{1,1,3}, T_3(n))+24\mathcal{N}(K_{1,2,2}, T_3(n)),  \\
    \mathcal{N}(\FlagGraph{5}{1--2,3--4}, T_3(n))&= 6\mathcal{N}(K_{2,3}, T_3(n)) + 6\mathcal{N}(K_{1,1,3}, T_3(n))+10\mathcal{N}(K_{1,2,2},T_3(n)),\\
    \mathcal{N}(\FlagGraph{5}{1--2,3--4,4--5,5--3},T_3(n)) &=4\mathcal{N}(K_{1,2,2}, T_3(n)),\\
    \mathcal{N}(\FlagGraph{5}{1--2,2--3,1--3,4--5,5--1,1--4},T_3(n)) &=2\mathcal{N}(K_{1,2,2},T_3(n)),
  \end{array}
  \right.
\end{equation*}
it is easy to derive that
\begin{equation*}
  \mathcal{N}(P_5,T_3(n))=\mathcal{N}(\FlagGraph{5}{1--2,3--4},T_3(n))+3\mathcal{N}(\FlagGraph{5}{1--2,3--4,4--5,5--3},T_3(n))+
  \mathcal{N}(\FlagGraph{5}{1--2,2--3,1--3,4--5,5--1,1--4},T_3(n)).
\end{equation*}
Thus, we have that $\mathcal{N}(P_5,G)\le\mathcal{N}(P_5,T_3(n)),$ which means that $P_5$ is $4$-Tur\'an-good.

\end{proof}

To prove that $P_5$ is $k$-Tur\'an-good, we need the following proposition.

\begin{proposition}
  \FlagGraph{5}{1--2,1--3,2--3,1--4,1--5,4--5} is $k$-Tur\'an-good for $k\ge4.$
\end{proposition}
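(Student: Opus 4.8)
The plan is to run the clique-decomposition machinery of Propositions \ref{K1+Kl} and \ref{K2+Kk-2} on the pairs $S=S_{3,2}(G)$ and $T=S_{3,2}(T_{k-1}(n))$, where $H$ denotes this bowtie (two triangles sharing one vertex), $G$ is an $n$-vertex $K_k$-free graph maximizing $\mathcal{N}(H,G)$. For a pair $(H_1,H_2)$ with $H_1\cong K_3$ and $H_2\cong K_2$ disjoint, $f_G(H,(H_1,H_2))$ equals the number of vertices of $H_1$ adjacent in $G$ to both vertices of $H_2$; this lies in $\{0,1,2,3\}$, and each bowtie is counted $a=2$ times (once for each of its two triangles playing the role of $H_1$), so $2\mathcal{N}(H,G)=\sum_{(H_1,H_2)\in S}f_G(H,(H_1,H_2))$. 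Writing $s_{\ge j}$ (resp.\ $t_{\ge j}$) for the number of pairs in $S$ (resp.\ $T$) with $f\ge j$, and using $f=\sum_{j\ge1}\mathbf 1[f\ge j]$, I get $2\mathcal{N}(H,G)=s_{\ge1}+s_{\ge2}+s_{\ge3}$ and likewise $2\mathcal{N}(H,T_{k-1}(n))=t_{\ge1}+t_{\ge2}+t_{\ge3}$. Thus it suffices to prove $s_{\ge j}\le t_{\ge j}$ for $j=1,2,3$ and add them; no Hall-type injection is even needed.

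The levels $j=1$ and $j=3$ are immediate. Since a triangle occupies three distinct parts of $T_{k-1}(n)$ while an edge occupies only two, every pair in $T$ has a triangle-vertex outside both edge-parts, whence $t_{\ge1}=|T|=\mathcal{N}(K_3\cup K_2,T_{k-1}(n))$; combined with $s_{\ge1}\le|S|=\mathcal{N}(K_3\cup K_2,G)$ and the fact that the disjoint union $K_3\cup K_2$ is $k$-Tur\'an-good (Proposition \ref{Kl+Km}, which applies for $k\ge4$), this gives $s_{\ge1}\le t_{\ge1}$. For $j=3$, $f=3$ forces the five vertices to span a $K_5$, so $s_{\ge3}=10\,\mathcal{N}(K_5,G)$ and $t_{\ge3}=10\,\mathcal{N}(K_5,T_{k-1}(n))$; since $K_5$ is $k$-Tur\'an-good for $k\ge6$ (Theorem \ref{Kr is k turan good}) and both sides vanish for $k\le5$, we get $s_{\ge3}\le t_{\ge3}$. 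In particular $k=4$ is already finished (there $s_{\ge2}=t_{\ge2}=0$ since no $K_4$ is available), and for $k=5$ the level $j=2$ below collapses to a single $k$-Tur\'an-goodness statement because there are no copies of $K_5$.

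The crux is $j=2$. Let $R$ be the graph obtained from $K_4$ by adding a fifth vertex joined to two of its vertices (equivalently $K_5$ minus two incident edges), which is $k$-Tur\'an-good for $k\ge5$ by Proposition \ref{K1+Kl} with $l=4$. A pair with $f\ge2$ has two triangle-vertices forming, together with the edge, a $K_4$, with its fifth vertex joined to two vertices of that $K_4$; a direct double count—each $K_5$ accounts for thirty copies of $R$ and ten pairs with $f=3$, while each pair with $f=2$ accounts for exactly one copy of $R$—yields $s_{\ge2}=\mathcal{N}(R,G)-20\,\mathcal{N}(K_5,G)$ and the analogous identity for $T_{k-1}(n)$. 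Hence $s_{\ge2}\le t_{\ge2}$ is equivalent to $\mathcal{N}(R,T_{k-1}(n))-\mathcal{N}(R,G)\ge20\bigl(\mathcal{N}(K_5,T_{k-1}(n))-\mathcal{N}(K_5,G)\bigr)$, i.e.\ the $R$-deficiency of $G$ relative to the Tur\'an graph must dominate twenty times its $K_5$-deficiency.

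This last inequality is the main obstacle: both deficiencies are nonnegative (as $R$ and $K_5$ are $k$-Tur\'an-good), but the required domination is \emph{not} a formal consequence of either graph being $k$-Tur\'an-good, and it only carries content when $k\ge6$. I would attack it by showing directly that $s_{\ge2}$ itself—the number of (triangle, disjoint edge) pairs having at least two triangle-vertices in the common neighbourhood of the edge—is maximized by $T_{k-1}(n)$, using the two-case scheme of Proposition \ref{Kl+Km}: when $\chi(G)\le k-1$ one may assume $G$ is complete $(k-1)$-partite and run the balancing/symmetrization argument (moving a vertex to a smaller part does not decrease the count), while when $\chi(G)\ge k$ one uses the Erd\H{o}s--Simonovits low-degree vertex to force $s_{\ge2}$ below its Tur\'an value by a full order of magnitude. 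An alternative is a joint clique-incidence injection in the spirit of Proposition \ref{K2+Kk-2} that tracks copies of $K_4$ and $K_5$ simultaneously, so that the $K_5$-correction term is absorbed rather than estimated. Either way, controlling this $K_5$-correction is where essentially all the work lies; the remaining levels are routine consequences of the Tur\'an-goodness of $K_3\cup K_2$, $R$, and $K_5$ already in hand.
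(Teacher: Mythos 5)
Your skeleton is sound: the identity $2\mathcal{N}(H,G)=s_{\ge1}+s_{\ge2}+s_{\ge3}$, the treatment of levels $j=1$ and $j=3$, the double-count $s_{\ge2}=\mathcal{N}(R,G)-20\,\mathcal{N}(K_5,G)$, and the complete arguments for $k=4$ and $k=5$ are all correct (for those two values of $k$ your route is a genuine, self-contained alternative to the paper's, which simply cites Theorem \ref{thm of H+K-1_is k turan good} and Corollary \ref{two small 5 turan good graphs}). But for $k\ge6$ --- the only range in which this proposition is new --- your proof has exactly the gap you flag: the inequality $s_{\ge2}\le t_{\ge2}$, i.e.
\begin{equation*}
\mathcal{N}(R,T_{k-1}(n))-\mathcal{N}(R,G)\ \ge\ 20\bigl(\mathcal{N}(K_5,T_{k-1}(n))-\mathcal{N}(K_5,G)\bigr),
\end{equation*}
is never proved. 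Because of the negative coefficient on the $K_5$ count, this does not follow from the Tur\'an-goodness of $R$ and $K_5$ separately: those facts only say both deficiencies are nonnegative, and they pull in opposite directions here. Proving that the functional $\mathcal{N}(R,\cdot)-20\,\mathcal{N}(K_5,\cdot)$ is maximized over $K_k$-free graphs by $T_{k-1}(n)$ is a new generalized Tur\'an statement of essentially the same difficulty as the proposition itself; your two proposed attacks are only named, not executed (in particular, the symmetrization step of Proposition \ref{Kl+Km} is not automatic for $s_{\ge2}$, precisely because the $K_5$ correction means one must compare magnitudes of losses and gains, not just signs). As written, this is a proof for $k\le5$ plus a program for $k\ge6$.

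It is worth seeing how the paper sidesteps this, because the fix is compatible with your setup: never introduce a count carrying a negative coefficient. Instead of $R$, the paper uses $K_4\cup K_1$, which is $k$-Tur\'an-good for $k\ge5$ by Proposition \ref{K1+Kl}, and verifies, type by type over the induced $5$-vertex subgraphs containing the bowtie (Table \ref{F(2) induced graph table}), the pointwise bound
\begin{equation*}
\mathcal{N}(H,G)\ \le\ \tfrac12\,\mathcal{N}(K_3\cup K_2,G)+\tfrac32\,\mathcal{N}(K_4\cup K_1,G)+\tfrac52\,\mathcal{N}(K_5,G),
\end{equation*}
valid for every graph $G$, with all coefficients nonnegative. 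The only slack in this bound is proportional to the number of \emph{induced} copies of $R$ in $G$, and a complete multipartite graph contains no induced $R$ (two non-adjacencies at one vertex force the other two endpoints into the same part), so equality holds on $T_{k-1}(n)$; Tur\'an-goodness of $K_3\cup K_2$ (Corollary \ref{K3+K2 is k turan good}), $K_4\cup K_1$, and $K_5$ (Theorem \ref{Kr is k turan good}) then finishes the proof in one line. In short: your level decomposition forces the tight-but-unprovable-by-these-means certificate with a $-20\,\mathcal{N}(K_5,\cdot)$ term, whereas the paper chooses a different basis of Tur\'an-good graphs for which the certificate has nonnegative coefficients and is still tight on the Tur\'an graph. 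Replacing your level-$2$ analysis by that certificate would close the gap.
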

\begin{proof}
   The cases that  \FlagGraph{5}{1--2,1--3,2--3,1--4,1--5,4--5} is $4$-Tur\'an-good and $5$-Tur\'an-good follow from Theorem \ref{thm of H+K-1_is k turan good} and Corollary \ref{two small 5 turan good graphs}, respectively.

   Now we assume that $k\ge6.$ Let $G$ be an $n$-vertex $K_k$-free graph with maximal number of copies of \FlagGraph{5}{1--2,2--3,1--3,1--4,1--5,4--5}. As before, we list all the parameters in Table \ref{F(2) induced graph table}.

     \begin{table}[h]
\caption{Induced $5$-vertex subgraphs containing \FlagGraph{5}{1--2,1--3,2--3,1--4,1--5,4--5}}\label{F(2) induced graph table}
\begin{center}
\begin{tabular}{|c|c|c|c|c|c|c|c| p{5cm}|}
\hline
\makecell*[c]{Induced $5$-vertex \\subgraphs containing \FlagGraph{5}{1--2,1--3,2--3,1--4,1--5,4--5}}&\FlagGraph{5}{1--2,2--3,1--3,1--4,1--5,4--5} &\FlagGraph{5}{1--2,1--3,1--4,1--5,2--3,3--4,4--5} &
 \FlagGraph{5}{1--2,1--3,1--4,1--5,2--3,2--4,2--5} & \FlagGraph{5}{1--2,1--3,1--4,1--5,2--4,2--5,3--4,3--5}& \FlagGraph{5}{1--2,1--3,2--3,2--4,2--5,3--4,3--5,4--5} &\FlagGraph{5}{1--2,1--3,1--4,2--3,2--4,2--5,3--4,3--5,4--5} &\FlagGraph{5}{1--2,1--3,1--4,1--5,2--3,2--4,2--5,3--4,3--5,4--5} \\ \hline
 Number of each subgraph in $G$& $x_1$ & $x_2$&$x_3$ &$x_4$&$x_5$ &$x_6$&$x_7$ \\ \hline
 Number of \FlagGraph{5}{1--2,3--4,4--5,3--5} in each subgraph &2&2&0&4&3&6&10 \\ \hline
 Number of $\FlagGraph{5}{1--2,2--3,1--3,1--4,2--4,3--4}$ in each subgraph &0&0&0&0&1&2&5 \\ \hline
 Number of $K_5$ in each subgraph &0&0&0&0&0&0&1 \\ \hline
 Number of \FlagGraph{5}{1--2,1--3,2--3,1--4,1--5,4--5} in each subgraph &1&1&0&2&2&6&15\\
\hline
\end{tabular}
\end{center}
\end{table}

    Fix $k\ge6.$ Corollary \ref{K3+K2 is k turan good} says that \FlagGraph{5}{1--2,3--4,4--5,3--5} is $k$-Tur\'an-good. \FlagGraph{5}{1--2,1--3,1--4,2--3,2--4,3--4} is $k$-Tur\'an-good by Proposition \ref{K1+Kl} and $K_5$ is $k$-Tur\'an-good by Theorem \ref{Kr is k turan good}. Therefore,
   \begin{equation*}
   \begin{aligned}
     \mathcal{N}(\FlagGraph{5}{1--2,1--3,2--3,1--4,1--5,4--5},G) &\le x_1+x_2+2x_4+2x_5+6x_6+15x_7 \\
      &\le\frac{\mathcal{N}(\FlagGraph{5}{1--2,3--4,4--5,3--5},G)+3\mathcal{N}(\FlagGraph{5}{1--2,1--3,1--4,2--3,2--4,3--4},G)+5\mathcal{N}(K_5,G)}{2} \\
      &\le\frac{\mathcal{N}(\FlagGraph{5}{1--2,3--4,4--5,3--5},T_{k-1}(n))+3\mathcal{N}(\FlagGraph{5}{1--2,1--3,1--4,2--3,2--4,3--4},T_{k-1}(n))+5\mathcal{N}(K_5,T_{k-1}(n))}{2}.
   \end{aligned}
   \end{equation*}

  Since
  \begin{equation*}
  \left\{
  \begin{array}{cl}
   \mathcal{N}(\FlagGraph{5}{1--2,1--3,2--3,1--4,1--5,4--5},T_{k-1}(n))&=2\mathcal{N}(K_{1,2,2},T_{k-1}(n))+6\mathcal{N}(K_{1,1,1,2},T_{k-1}(n))+15\mathcal{N}(K_5,T_{k-1}(n)),\\
   \mathcal{N}(\FlagGraph{5}{1--2,3--4,4--5,3--5},T_{k-1}(n))  &=4\mathcal{N}(K_{1,2,2},T_{k-1}(n))+6\mathcal{N}(K_{1,1,1,2},T_{k-1}(n))+10\mathcal{N}(K_5,T_{k-1}(n)),\\
   \mathcal{N}(\FlagGraph{5}{1--2,1--3,1--4,2--3,2--4,3--4},T_{k-1}(n))&=2\mathcal{N}(K_{1,1,1,2},T_{k-1}(n))+5\mathcal{N}(K_5,T_{k-1}(n)),
   \end{array}
  \right.
\end{equation*}
it is easy to check that
\begin{equation*}
  \frac{\mathcal{N}(\FlagGraph{5}{1--2,3--4,4--5,3--5},T_{k-1}(n))+3\mathcal{N}(\FlagGraph{5}{1--2,1--3,1--4,2--3,2--4,3--4},T_{k-1}(n))+5\mathcal{N}(K_5,T_{k-1}(n))}{2}=\mathcal{N}(\FlagGraph{5}{1--2,1--3,2--3,1--4,1--5,4--5},T_{k-1}(n)).
\end{equation*}
This completes the proof.
\end{proof}

Now we can prove the following theorem.

\begin{theorem}
  $P_5$ is $k$-Tur\'an-good for any $k\ge 5.$
\end{theorem}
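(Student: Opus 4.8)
The plan is to mimic the linear-programming style counting argument used above for $P_4$ and for $P_5$ with $k=4$. Fix $k\ge 5$ and let $G$ be an $n$-vertex $K_k$-free graph maximizing $\mathcal{N}(P_5,G)$. Since every copy of $P_5$ spans five vertices, I would first enumerate all isomorphism types of induced $5$-vertex subgraphs of a $K_k$-free graph that contain at least one $P_5$, letting $x_i$ denote the number of induced copies of the $i$-th type in $G$. For each type I record the number of $P_5$'s it contains, so that $\mathcal{N}(P_5,G)=\sum_i c_i x_i$, together with the number of copies of each auxiliary graph I intend to use. As in the $k=4$ case, the natural auxiliary graphs are the three special $5$-vertex graphs already shown to be $k$-Tur\'an-good: the matching $M_2$ (with an isolated vertex added), the disjoint union $K_3\cup K_2$ of Corollary \ref{K3+K2 is k turan good}, and the bowtie (two triangles sharing a vertex) of the preceding proposition, these being Tur\'an-good by Theorem \ref{Ml is k turan good}, Corollary \ref{K3+K2 is k turan good}, and that proposition. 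Because $K_4$ and, when $k\ge 6$, $K_5$ now occur as induced subgraphs, I also bring in the cliques $K_4$ and $K_5$, which are $k$-Tur\'an-good by Theorem \ref{Kr is k turan good}.

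Each auxiliary graph $H_j$ satisfies an inequality $\sum_i a_{ij}x_i\le \mathcal{N}(H_j,G)\le \mathcal{N}(H_j,T_{k-1}(n))$, where the left inequality holds because the sum only accounts for copies of $H_j$ lying inside a $5$-set that already carries a $P_5$, and the right inequality is exactly the Tur\'an-goodness of $H_j$. The core of the argument is then to exhibit nonnegative multipliers $\lambda_j$ giving the pointwise domination $\sum_j \lambda_j a_{ij}\ge c_i$ for every induced type $i$. Granting this, one chains $\mathcal{N}(P_5,G)=\sum_i c_i x_i\le \sum_j \lambda_j\sum_i a_{ij}x_i\le \sum_j\lambda_j\mathcal{N}(H_j,T_{k-1}(n))$, and it remains only to verify the identity $\sum_j \lambda_j\mathcal{N}(H_j,T_{k-1}(n))=\mathcal{N}(P_5,T_{k-1}(n))$. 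This last step forces the multipliers to be chosen so that the domination is an \emph{equality} on precisely the complete multipartite $5$-vertex graphs occurring in $T_{k-1}(n)$, namely $K_{2,3}$, $K_{1,1,3}$, $K_{1,2,2}$, $K_{1,1,1,2}$, and (only when $k\ge 6$) $K_5$. I would confirm it by expanding both sides in the basis of these multipartite counts, exactly as in the displayed systems of equalities used for $k=4$.

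Two points deserve care. The cases $k=5$ and $k\ge 6$ differ only in whether $K_5$ appears: for $k=5$ the $K_5$-type has $x_i=0$ and the helper $K_5$ is dropped, whereas for $k\ge 6$ it must be dominated and retained, so I would either argue uniformly with the understanding that the $K_5$ contribution vanishes when $k=5$, or split into these two subcases. The main obstacle, however, is that the enumeration of induced $5$-vertex subgraphs containing a $P_5$ is considerably larger than in the $K_4$-free setting, since dense $5$-vertex graphs up to $K_5$ are now admissible, and one must find a single nonnegative combination of the auxiliary counts that dominates the $P_5$-count on every one of these types while being tight on the multipartite ones. Establishing this feasibility, essentially solving the underlying linear program and confirming that its tight faces are exactly the multipartite types, together with the bookkeeping of $P_5$- and auxiliary-counts in each induced subgraph, is where the real work lies; the concluding Tur\'an-graph identity is then a routine expansion.
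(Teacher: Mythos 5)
Your strategy is exactly the paper's: enumerate the induced $5$-vertex types containing $P_5$, write $\mathcal{N}(P_5,G)=\sum_i c_i x_i$, dominate this by a fixed nonnegative combination of counts of Tur\'an-good auxiliary graphs, and close with an identity on $T_{k-1}(n)$; you even select the same three auxiliary graphs ($M_2$ plus an isolated vertex, $K_3\cup K_2$, and the bowtie) and the same $k=5$ versus $k\ge 6$ case split. However, as written the proposal has a genuine gap: the entire content of the theorem is the \emph{feasibility} of the linear program you set up, and you never exhibit the multipliers $\lambda_j$ nor verify the pointwise domination $\sum_j\lambda_j a_{ij}\ge c_i$ on each induced type. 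You explicitly defer this ("where the real work lies"), so nothing in the proposal establishes that a valid combination exists, and without it there is no bound on $\mathcal{N}(P_5,G)$ at all.

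For comparison, the paper carries this out concretely. For $k\ge 6$ there are $17$ induced types (Table \ref{5 vertex induced graph table3}), and the multipliers are $(1,3,1)$ on $M_2\cup K_1$, $K_3\cup K_2$ and the bowtie respectively; no clique helpers are needed, so your proposed $K_4$ and $K_5$ would simply receive multiplier $0$. One checks $c_i\le a_{i1}+3a_{i2}+a_{i3}$ type by type, with equality exactly at the complete multipartite types $K_{2,3}$, $K_{1,1,3}$, $K_{1,2,2}$, $K_{1,1,1,2}$ and $K_5$ (for instance $60=15+3\cdot 10+15$ at $K_5$, and $24=10+3\cdot 4+2$ at $K_{1,2,2}$), and then expands all four counts over the multipartite basis to get
\begin{equation*}
  \mathcal{N}(P_5,T_{k-1}(n))=\mathcal{N}(M_2\cup K_1,T_{k-1}(n))+3\,\mathcal{N}(K_3\cup K_2,T_{k-1}(n))+\mathcal{N}(\text{bowtie},T_{k-1}(n)),
\end{equation*}
after which the $k=5$ case follows by setting the $K_5$ contributions to zero. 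Your observation that tightness must occur precisely on the multipartite types is correct and is indeed what pins down the multipliers, but until the table of counts is computed and the $17$ inequalities plus the final identity are verified, what you have is a correct plan rather than a proof.
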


\begin{proof}
We first prove that $P_5$ is $k$-Tur\'an-good for $k\ge6.$ Let $G$ be an $n$-vertex $K_k$-free graph containing the most number of copies of $P_5$. We obtain Table \ref{5 vertex induced graph table3} by counting.

 \begin{table}[h]
\caption{Induced $5$-vertex subgraphs containing $P_5$}\label{5 vertex induced graph table3}
\begin{center}
\resizebox{\textwidth}{!}{
\begin{tabular}{|c|c|c|c|c|c|c|c|c|c|c|c|c|c|c|c|c|c| p{5cm}|}
\hline
\makecell*[c]{Induced $5$-vertex \\subgraphs containing $P_5$}& \FlagGraph{5}{1--2,2--3,3--4,4--5}& \FlagGraph{5}{1--2,2--3,3--4,4--5,1--5}&\FlagGraph{5}{1--2,2--3,3--4,4--5,5--3}&
\FlagGraph{5}{1--2,2--3,3--4,4--1,1--5}&\FlagGraph{5}{1--2,2--3,3--4,4--1,1--5,1--3}&
\FlagGraph{5}{1--2,2--3,3--4,4--1,1--5,2--4}&\FlagGraph{5}{1--2,2--3,3--4,2--5,3--5}&
\FlagGraph{5}{1--2,2--3,3--4,4--5,5--1,2--5}&\FlagGraph{5}{1--2,2--3,3--4,4--5,5--1,2--5,2--4}&
\FlagGraph{5}{1--2,2--3,3--4,4--5,1--5,1--3,2--5}& \FlagGraph{5}{1--3,1--4,1--5,2--3,2--4,2--5}&
\FlagGraph{5}{1--2,1--3,1--4,1--5,2--3,2--4,2--5}&\FlagGraph{5}{1--2,1--3,1--4,1--5,2--4,2--5,3--4,3--5}&
\FlagGraph{5}{1--2,2--3,2--4,2--5,3--4,3--5,4--5}&\FlagGraph{5}{1--2,2--3,2--4,2--5,3--4,3--5,4--5,1--5}&
\FlagGraph{5}{1--3,1--4,1--5,2--3,2--4,2--5,3--4,3--5,4--5}&\FlagGraph{5}{1--2,1--3,1--4,1--5,2--3,2--4,2--5,3--4,3--5,4--5}\\ \hline
 Number of each subgraph in $G$& $x_1$ & $x_2$&$x_3$ &$x_4$&$x_5$&$x_6$&$x_7$&$x_8$&$x_9$&$x_{10}$&$x_{11}$&$x_{12}$&$x_{13}$&$x_{14}$&$x_{15}$&$x_{16}$&$x_{17}$ \\ \hline
 \makecell*[c]{Number of $\FlagGraph{5}{1--2,3--4}$ \\in each subgraph} &3&5&4&4&4&5&3&6&7&8&6&6&10&6&9&12&15 \\ \hline
 \makecell*[c]{Number of \FlagGraph{5}{1--2,3--4,4--5,5--3}\\ in each subgraph} &0&0&1&0&0&1&0&1&2&2&0&0&4&1&3&6&10 \\ \hline
 \makecell*[c]{Number of \FlagGraph{5}{1--2,2--3,1--3,1--4,5--1,4--5} \\in each subgraph} &0&0&0&0&0&0&0&0&1&0&0&0&2&0&2&6&15 \\ \hline
 \makecell*[c]{Number of $P_5$\\ in each subgraph} & 1&5&2&2&2&4&1&7&9&12&6&6&24&6&18&36&60\\
\hline
\end{tabular}}
\end{center}
\end{table}

Therefore,
\begin{equation*}
\begin{aligned}
  \mathcal{N}(P_5,G)=&1x_1+5x_2+2x_3+2x_4+2x_5+4x_6+1x_7+7x_8+9x_{9} \\
                      &+12x_{10}+6x_{11}+6x_{12}+24x_{13}+6x_{14}+18x_{15}+36x_{16}+60x_{17}.
\end{aligned}
\end{equation*}

Since \FlagGraph{5}{1--2,3--4}, \FlagGraph{5}{1--2,3--4,4--5,5--3} and \FlagGraph{5}{1--2,2--3,1--3,4--5,5--1,1--4} are $k$-Tur\'an-good, we get the following inequalities.
  \begin{equation*}
    \left\{
    \begin{array}{ccc}
      3x_1+5x_2+4x_3+4x_4+4x_5+5x_6+3x_7+6x_8+7x_9\\      +8x_{10}+6x_{11}+6x_{12}+10x_{13}+6x_{14}+9x_{15}+12x_{16}+15x_{17}&\le \mathcal{N}(\FlagGraph{5}{1--2,3--4},G) &\le \mathcal{N}(\FlagGraph{5}{1--2,3--4},T_{k-1}(n)),  \\
       1x_3+1x_6+1x_8+2x_9+2x_{10}+4x_{13}+1x_{14}+3x_{15}+6x_{16}+10x_{17}&\le \mathcal{N}(\FlagGraph{5}{1--2,3--4,4--5,5--3},G) &\le \mathcal{N}(\FlagGraph{5}{1--2,3--4,4--5,5--3},T_{k-1}(n)),\\
       1x_9+2x_{13}+2x_{15}+6x_{16}+15x_{17}&\le \mathcal{N}(\FlagGraph{5}{1--2,2--3,1--3,4--5,5--1,1--4},G) &\le\mathcal{N}(\FlagGraph{5}{1--2,2--3,1--3,4--5,5--1,1--4},T_{k-1}(n)).
    \end{array}
    \right.
  \end{equation*}

Thus,
\begin{equation*}
  \begin{aligned}
    \mathcal{N}(P_5,G)&\le \mathcal{N}(\FlagGraph{5}{1--2,3--4},G)+3\mathcal{N}(\FlagGraph{5}{1--2,3--4,4--5,5--3},G)+\mathcal{N}(\FlagGraph{5}{1--2,2--3,1--3,4--5,5--1,1--4},G) \\
    &\le\mathcal{N}(\FlagGraph{5}{1--2,3--4},T_{k-1}(n))+3\mathcal{N}(\FlagGraph{5}{1--2,3--4,4--5,5--3},T_{k-1}(n))+
    \mathcal{N}(\FlagGraph{5}{1--2,2--3,1--3,4--5,5--1,1--4},T_{k-1}(n)).
  \end{aligned}
  \end{equation*}

  Since
\begin{equation*}
  \left\{
  \begin{array}{cl}
    \mathcal{N}(P_5, T_{k-1}(n)) =& 60\mathcal{N}(K_5,T_{k-1}(n))+36\mathcal{N}(K_{1,1,1,2},T_{k-1}(n)) \\
                                  &+24\mathcal{N}(K_{1,2,2},T_{k-1}(n))+6\mathcal{N}(K_{1,1,3},T_{k-1}(n))+6\mathcal{N}(K_{2,3},T_{k-1}(n)),  \\
    \mathcal{N}(\FlagGraph{5}{1--2,3--4}, T_{k-1}(n))=&15\mathcal{N}(K_5,T_{k-1}(n))+12\mathcal{N}(K_{1,1,1,2}, T_{k-1}(n)) \\
    &+ 10\mathcal{N}(K_{1,2,2}, T_{k-1}(n))+6\mathcal{N}(K_{1,1,3},T_{k-1}(n))+6\mathcal{N}(K_{2,3},T_{k-1}(n)),\\
    \mathcal{N}(\FlagGraph{5}{1--2,3--4,4--5,3--5}, T_{k-1}(n)) =&10\mathcal{N}(K_5,T_{k-1}(n))+6\mathcal{N}(K_{1,1,1,2}, T_{k-1}(n))+4\mathcal{N}({K_{1,2,2},T_{k-1}(n)}),\\
    \mathcal{N}(\FlagGraph{5}{1--2,2--3,1--3,4--5,5--1,1--4},T_{k-1}(n))=&15\mathcal{N}(K_5,T_{k-1}(n))
    +6\mathcal{N}(K_{1,1,1,2},T_{k-1}(n))+2\mathcal{N}(K_{1,2,2},T_{k-1}(n)),
  \end{array}
  \right.
\end{equation*}
it is easy to check that
\begin{equation*}
  \mathcal{N}(P_5,T_{k-1}(n))=\mathcal{N}(\FlagGraph{5}{1--2,3--4},T_{k-1}(n))+3\mathcal{N}(\FlagGraph{5}{1--2,3--4,4--5,5--3},T_{k-1}(n))+
    \mathcal{N}(\FlagGraph{5}{1--2,2--3,1--3,4--5,5--1,1--4},T_{k-1}(n)),
\end{equation*}
which completes the proof.

 When $k=5,$ nothing changes except $x_{17}=0$ in Table \ref{5 vertex induced graph table3} and $\mathcal{N}(K_5,T_4(n))=0,$ the result that $P_5$ is $5$-Tur\'an-good follows by the same analysis.

\end{proof}

\section{Concluding remarks}
In this paper, we construct some new kinds of $k$-Tur\'an-good graphs. Based on the fact that some certain small graphs are $k$-Tur\'an-good, we can prove that $P_4$ and $P_5$ are $k$-Tur\'an-good. This may also work for $P_l$ when $l\ge6,$ but the number of $l$-vertex graphs containing at least one $P_l$ is too large to handle. Some new ideas to attack the problem are expected.

\bibliographystyle{abbrv}
\bibliography{REF}

\end{document}